\newcommand{\R}{\mathbb R}
\newcommand{\p}{\partial}
\newcommand{\trinorm}{|\!|\!|}
\newcommand{\pa}{\partial}
\newtheorem{theorem}{Theorem}[section]
\newtheorem{proposition}[theorem]{Proposition}
\newtheorem{remark}[theorem]{Remark}
\newtheorem{lemma}[theorem]{Lemma}
\newtheorem{corollary}[theorem]{Corollary}
\begin{document}

\vglue-1cm \hskip1cm
\title[The generalized Zakharov-Kuznetsov equation]{A note on the 2D generalized Zakharov-Kuznetsov equation: Local, global, and
scattering results}

\author[L. G. Farah]{Luiz G. Farah}
\address{ICEx, Universidade Federal de Minas Gerais, Av. Ant\^onio Carlos, 6627, Caixa Postal 702, 30123-970,
Belo Horizonte-MG, Brazil}
\email{lgfarah@gmail.com}

\author[F. Linares]{Felipe Linares}
\address{IMPA, Estrada Dona Castorina 110, 22460-320, Rio de Janeiro-RJ,
 Brazil.}
\email{linares@impa.br}

\author[A. Pastor]{Ademir Pastor}
\address{IMECC-UNICAMP, Rua S\'ergio Buarque de Holanda, 651, 13083-859, Cam\-pi\-nas-SP, Bra\-zil}
\email{apastor@ime.unicamp.br}

\subjclass[2010]{Primary 35Q53 ; Secondary 35B40, 35B60}

\date{}

\keywords{Local and global well-posedness, nonlinear scattering}


\begin{abstract}
We consider the generalized two-dimensional Zakharov-Kuznetsov
equation $u_t+\partial_x \Delta u+\partial_x(u^{k+1})=0$, where
$k\geq3$ is an integer number. For $k\geq8$ we prove local
well-posedness in the $L^2$-based Sobolev spaces
$H^s(\mathbb{R}^2)$, where $s$ is greater than the critical scaling
index $s_k=1-2/k$. For $k\geq 3$  we also establish a sharp criteria
to obtain global $H^1(\R^2)$ solutions. A nonlinear scattering
result in $H^1(\R^2)$ is also established assuming the initial data
is small and belongs to a suitable Lebesgue space.
\end{abstract}

\maketitle

\section{Introduction}\label{introduction}

This note sheds new light on the local and global well-posedness of
the initial-value problem (IVP) associated with the generalized
Zakharov-Kuznetsov (gZK) equation in two-space dimensions:
\begin{equation}\label{gzk}
\begin{cases}
{\displaystyle u_t+\partial_x \Delta u+ \partial_x(u^{k+1})  =  0,  }  \qquad (x,y) \in \mathbb{R}^2, \,\,\,\, t>0, \\
{\displaystyle  u(x,y,0)=u_0(x,y)},
\end{cases}
\end{equation}
where $u$ is a real-valued function,
$\Delta=\partial_x^2+\partial_y^2$ stands for the Laplacian
operator, and $k\geq1$ is an integer number. Here we will concern
with the $L^2$-supercritical case, i.e. $k\geq 3$ in \eqref{gzk}.

In the case $k=1$, the equation in \eqref{gzk} has a physical
meaning and it was formally deduced by Zakharov and Kuznetsov
\cite{ZK} as an asymptotic model to describe the propagation of
nonlinear ion-acoustic waves in a magnetized plasma. The gZK
equation  may also be seen as a natural, two-dimensional extension
of the well-known  generalized Korteweg-de Vries (KdV) equation
\[ u_t+u_{xxx}+\partial_x(u^{k+1})  =  0, \qquad
x\in\mathbb{R},\,\,\, t>0.
\]

Our main purpose here lies in establishing local and global (in
time) well-posedness results. These issues have already been studied
in Faminskii \cite{Fa}, Biagioni and Linares \cite{BL}, and Linares
and Pastor \cite{LP}, \cite{LP1}.  In \cite{Fa}, Faminskii
considered the the case $k=1$ and showed local and global
well-posedness  in $H^m(\mathbb{R}^2)$, $m\geq1$ integer. In
\cite{BL}, Biagioni and Linares dealt with the case $k=2$ and proved
local well-posedness for data in $H^1(\mathbb{R}^2)$. By considering
the cases $k=1$ and $k=2$ Linares and Pastor \cite{LP} improved the
local results in \cite{BL}, \cite{Fa} by showing that both IVP's are
locally well-posed  in $H^s(\mathbb{R}^2)$, $s>3/4$. Moreover the
authors also show that  if $u_0\in H^1(\mathbb{R}^2)$ and satisfies
$\|u_0\|_{L^2}< \|Q\|_{L^2}$, where $Q$ is the unique positive
radial solution (hereafter refereed to as the ground state solution)
of the elliptic equation
\begin{equation}\label{solwave}
-\Delta Q+Q-Q^3=0,
\end{equation}
then (for $k=2$) global well-posedness holds in $H^1(\mathbb{R}^2)$.
The case $k\geq3$ was studied in \cite{LP1} where the authors
established local well-posedness in $H^s(\mathbb{R}^2)$, $s>3/4$, if
$3\leq k \leq7$ and in $H^s(\mathbb{R}^2)$, $s>s_k^*:=1-3/(2k-4)$,
if $k\geq8$. A global result for small initial  data in
$H^1(\mathbb{R}^2)$ was also proved.

The best local well-posedness results known are summarized in the
following theorem.

\begin{theorem}[\cite{LP},\cite{LP1}]\label{reviewtheo}
The following statements holds.
\begin{itemize}
  \item[(i)] Assume $1\leq k\leq7$. Then for any $u_0 \in H^s(\mathbb{R}^2)$,
$s>3/4$, there exist $T=T(\|u_0\|_{H^s})>0$, a space $X_T\subset
C([0,T];H^s(\mathbb{R}^2))$ and a unique solution $u\in X_T$  of the
IVP \eqref{gzk} defined in $[0,T]$. Moreover, continuous dependence
upon the data holds.
  \item[(ii)] Assume $k\geq8$. Then for any $u_0 \in H^s(\mathbb{R}^2)$,
$s>s_k^*:=1-3/(2k-4)$, there exist $T=T(\|u_0\|_{H^s})>0$, a space
$Z_T\subset C([0,T];H^s(\mathbb{R}^2))$ and a unique solution $u\in
X_T$  of the IVP \eqref{gzk} defined in $[0,T]$. Moreover,
continuous dependence upon the data holds.
\end{itemize}
\end{theorem}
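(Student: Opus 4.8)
The plan is to establish both parts of Theorem~\ref{reviewtheo} by a Picard iteration for the Duhamel formulation of \eqref{gzk}, carried out in a space adapted to the dispersion of the linear flow. Let $U(t)=e^{-t\partial_x\Delta}$ be the associated unitary group, so that $\widehat{U(t)u_0}(\xi,\eta)=e^{it\xi(\xi^2+\eta^2)}\widehat{u_0}(\xi,\eta)$; then \eqref{gzk} is (formally) equivalent to
\begin{equation}\label{duh}
u(t)=U(t)u_0-\int_0^t U(t-t')\,\partial_x\!\left(u^{k+1}\right)\!(t')\,dt'=:\Phi(u)(t),
\end{equation}
and one seeks a fixed point of $\Phi$ in a closed ball of a Banach space $X_T$ (for part (i)) or $Z_T$ (for part (ii)) of functions on $\R^2\times[0,T]$.

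First I would assemble the linear estimates for $U(t)$. Besides the trivial $\|U(t)u_0\|_{L^\infty_T H^s}=\|u_0\|_{H^s}$, there are three, each obtained by freezing the $y$-frequency and reducing to a one-dimensional oscillatory integral with phase $t(\xi^3+\xi\eta^2)$, by arguments analogous to those of Kenig--Ponce--Vega for the Airy group: (a) the sharp Kato smoothing gaining one $x$-derivative, $\|\partial_x U(t)u_0\|_{\kse}\lesssim\|u_0\|_{L^2}$, together with its dual and inhomogeneous counterparts (controlling $\partial_x\int_0^t U(t-t')g\,dt'$ in the energy norm $L^\infty_T L^2_{xy}$ and in $\kse$ by $\|g\|_{L^1_x L^2_{yT}}$); (b) the maximal-function estimate $\|U(t)u_0\|_{\mne}\lesssim\|u_0\|_{H^s}$, valid for $s>3/4$ and false for $s<3/4$ --- the sole origin of the number $3/4$; and (c) for the large-power case, an auxiliary Strichartz-type bound $\|U(t)u_0\|_{\str}\lesssim\|u_0\|_{H^{s_0}}$ for a small $s_0\ge0$. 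The norm of $X_T$ then combines $\|\cdot\|_{L^\infty_T H^s}$ with the $J^s$-weighted smoothing and maximal-function norms ($J^s=(1-\Delta)^{s/2}$), $\|J^s\partial_x\,\cdot\|_{\kse}$ and $\|J^s\,\cdot\|_{\mne}$, while $Z_T$ is the same with the $\str$ norm adjoined --- this is what creates enough room to absorb $u^{k+1}$ when $k$ is large.

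The core of the proof --- and the only serious obstacle --- is the nonlinear estimate: one must show $\|\Phi(u)\|_{X_T}\le C\|u_0\|_{H^s}+C\,T^{\rho}\|u\|_{X_T}^{k+1}$ for some $\rho>0$, together with the matching Lipschitz bound for $\Phi(u)-\Phi(v)$. Feeding \eqref{duh} into the linear estimates above reduces this to a multilinear inequality of the schematic form $\|u^{k+1}\|_{\mathcal Y}\lesssim T^{\rho}\|u\|_{X_T}^{k+1}$, where $\mathcal Y$ collects the dual spaces dictated by the linear estimates --- for the energy and smoothing parts one ends up with quantities like $\|J^s(u^{k+1})\|_{L^1_x L^2_{yT}}$, the $\partial_x$ in the nonlinearity being absorbed by the smoothing gain in (a), plus the analogous quantity for the Strichartz part. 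Expanding by the fractional Leibniz rule, one places the $s$ derivatives on a single factor kept in a smoothing norm and distributes the remaining $k$ factors over $\kse$, $\mne$ and (for $k\ge8$) $\str$ by H\"older in the space--time variables, a H\"older step in $t$ on $[0,T]$ producing the factor $T^{\rho}$. The arithmetic here is decisive: the number of factors forced into the regularity-costing norms $\mne$ and $\str$ grows with $k$, so that $s>3/4$ is enough exactly when $1\le k\le7$ (where the scaling index $s_k=1-2/k$ is itself below $3/4$), whereas for $k\ge8$ one needs at least $s>s_k\ge3/4$, and closing the bookkeeping with the norms at hand forces the slightly larger value $s>s_k^*=1-3/(2k-4)$ (note $s_8^*=3/4$), the choice of the auxiliary Strichartz norm $\str$ in $Z_T$ entering precisely here.

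Finally, for $T=T(\|u_0\|_{H^s})>0$ small the nonlinear estimate makes $\Phi$ a contraction of a ball of $X_T$ (resp.\ $Z_T$) into itself, yielding the solution; uniqueness holds in the ball and the Lipschitz estimates give continuous dependence on $u_0$, while $u\in C([0,T];H^s(\R^2))$ follows from the $L^\infty_T H^s$ control by a routine continuity-in-time argument. As indicated, the hard part is the multilinear estimate: balancing the derivative budget --- a single shared $x$-smoothing --- against the number of factors that must go into the maximal-function and Strichartz norms is exactly what pins down the thresholds $3/4$ and $s_k^*$, and exactly what the present paper sharpens for $k\ge8$.
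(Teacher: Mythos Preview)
The paper does not give its own proof of Theorem~\ref{reviewtheo}: the result is simply quoted from \cite{LP} and \cite{LP1}, and the paper's contribution is the sharper Theorem~\ref{theorem3} for $k>8$. Your sketch is nonetheless an accurate outline of the argument in those references --- Duhamel formulation, contraction in a space built from the Kato smoothing norm $\kse$, the maximal-function norm (the source of the threshold $3/4$), and Strichartz-type norms, with the nonlinearity handled via the fractional Leibniz/chain rules and H\"older --- and this is exactly the machinery the paper deploys in Section~\ref{LWP} to prove Theorem~\ref{theorem3}.

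One small point of comparison: in the paper's implementation (Section~\ref{LWP}) the Duhamel term is not placed in the dual smoothing space $L^1_xL^2_{yT}$ as you suggest; instead, Minkowski in $t$ is used first so that after group properties everything reduces to bounding $\int_0^T\|D^s_x(u^ku_x)\|_{L^2_{xy}}\,dt'$ (the quantity $A_0$), and it is this single integrand that is split by the Leibniz and chain rules and then distributed over the smoothing, maximal, and Strichartz norms via H\"older. The difference is organizational rather than substantive, and your reading of why the thresholds $3/4$ and $s_k^*$ arise is correct.
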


Concerning other questions on the gZK equation we refer the reader
to \cite{BIM}, \cite{dB}, \cite{LPS}, \cite{Pa}, \cite{PS}, and
references therein.

To motivate the results to follow, let us perform a scaling
argument: if $u$ solves \eqref{gzk}, with initial data $u_0$, then
$$
u_\lambda(x,y,t)=\lambda^{2/k} u(\lambda x, \lambda y, \lambda^3t)
$$
also solves \eqref{gzk}, with initial data
$u_\lambda(x,y,0)=\lambda^{2/k} u_0(\lambda x, \lambda y)$, for any
$\lambda>0$. Hence,
\begin{equation}\label{scaling}
\|u(\cdot,\cdot,0) \|_{\dot{H}^s}=\lambda^{2/k+s-1}
\|u_0\|_{\dot{H}^s},
\end{equation}
where $\dot{H}^s=\dot{H}^s(\mathbb{R}^2)$ denotes the homogeneous
Sobolev space of order $s$. As a consequence of \eqref{scaling}, the
scale-invariant Sobolev space for the gZK equation is
$H^{s_k}(\mathbb{R}^2)$, where $s_k=1-2/k$. Therefore, one expects
that the Sobolev spaces $H^{s}(\mathbb{R}^2)$ for studying the
well-posedness of \eqref{gzk} are those with indices $s>s_k$.

It should be noted that $s_k<3/4$ if $1\leq k\leq7$, $s_k=s_k^*=3/4$
if $k=8$, and $s_k^*>s_k$ if $k>8$. Thus, in view of Theorem
\ref{reviewtheo}, except in the case $k=8$, a gap for the local
well-posedness is left between the index conjectured by the scaling
argument and that one known in the current literature. One of our
goal here is to fulfill this gap by reaching the critical index
$s_k=1-2/k$ (up to the endpoint) in the case $k>8$. More precisely,
we prove the following.

\begin{theorem}   \label{theorem3}
Let $k>8$ and $s_k=1-2/k$. For any $u_0 \in H^s(\mathbb{R}^2)$,
$s>s_k$, there exist $T=T(\|u_0\|_{H^s})>0$ and a unique solution of
the IVP \eqref{gzk}, defined in the interval $[0,T]$, such that
\begin{equation}\label{b1.2}
u \in C([0,T];H^s(\mathbb{R}^2)),
\end{equation}
\begin{equation}\label{b2.2}
\|u_x\|_{L^\infty_xL^2_{yT}} + \|D^s_xu_x\|_{L^\infty_xL^2_{yT}}  +
\|D^s_yu_x\|_{L^\infty_xL^2_{yT}}  <\infty,
\end{equation}
\begin{equation}\label{b3.2}
 \|u\|_{L^{3k/2+}_T L^\infty_{xy}}+   \|u_x\|_{L^{3k/(k+2)}_T L^\infty_{xy}}
  <\infty,
\end{equation}
and
\begin{equation}\label{b4.2}
\|u\|_{L^{k/2}_x L^\infty_{yT}}
  <\infty.
\end{equation}
Moreover, for any $T'\in(0,T)$ there exists a neighborhood $U$ of
$u_0$ in $H^s(\mathbb{R}^2)$ such that the map
$\widetilde{u}_0\mapsto \widetilde{u}(t)$ from $U$ into the class
defined by \eqref{b1.2}--\eqref{b4.2} is smooth.
\end{theorem}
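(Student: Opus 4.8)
The plan is to run a contraction-mapping argument on the Duhamel formulation
\begin{equation}\label{duhamel}
u(t) = U(t)u_0 - \int_0^t U(t-t')\,\partial_x\big(u^{k+1}\big)(t')\,dt',
\end{equation}
where $\{U(t)\}_{t\in\R}$ is the unitary group in $L^2(\R^2)$ generated by the linear part $\partial_t+\partial_x\Delta$ (so $U(t)=e^{-t\partial_x\Delta}$). Fixing $s>s_k=1-2/k$, I would let $Z_T$ be the closed ball of a suitable radius in the Banach space of functions on $\R^2\times[0,T]$ endowed with the norm obtained by adding the quantities in \eqref{b1.2}--\eqref{b4.2} (together with the $D^s_x$ and $D^s_y$ versions of the norms appearing in \eqref{b3.2}--\eqref{b4.2}, which are needed to close the argument), and prove that the right-hand side of \eqref{duhamel} is a contraction on $Z_T$ provided $T=T(\|u_0\|_{H^s})$ is small. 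Persistence \eqref{b1.2}, uniqueness in the whole solution class, and smoothness of the data-to-solution map then follow from the abstract fixed-point scheme, the map being polynomial in $u$.

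The first substantial step is to collect the linear estimates for $U(t)$, all of them at the scaling of $H^{s_k}$: (a) the sharp Kato-type smoothing $\|\partial_x U(t)u_0\|_{L^\infty_xL^2_{yT}}\lesssim\|u_0\|_{L^2}$ and its companions with $D^s_x$, $D^s_y$; (b) Strichartz-type inequalities for the two-dimensional dispersion, $\|U(t)u_0\|_{L^q_TL^\infty_{xy}}\lesssim\|u_0\|_{\dot{H}^{1-3/q}}$ and the endpoint-type version feeding \eqref{b3.2}; (c) the maximal-in-time estimate $\|U(t)u_0\|_{L^{k/2}_xL^\infty_{yT}}\lesssim\|u_0\|_{\dot{H}^{s_k}}$ behind \eqref{b4.2}; and (d) the retarded counterparts of (a)--(c) for the Duhamel integral, deduced from the homogeneous ones by duality, the Christ--Kiselev lemma, and the $\partial_x\!\int_0^tU(t-t')F\,dt'\in L^\infty_xL^2_{yT}$ gain from $F\in L^1_xL^2_{yT}$. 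Many of these are already in Faminskii \cite{Fa} and Linares--Pastor \cite{LP}, \cite{LP1}; what must be sharpened --- and this is precisely what allows the descent from $s_k^*$ to $s_k$ when $k>8$ --- is the Strichartz/maximal estimate at the exact critical exponent, which I would establish by a Littlewood--Paley decomposition, a van der Corput bound on each frequency block (the Zakharov--Kuznetsov phase being genuinely degenerate, which underlies the endpoint loss recorded by the ``$+$'' in \eqref{b3.2}), and summation using $s>s_k$.

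The core of the proof is the nonlinear estimate, namely the control of $\partial_x(u^{k+1})=(k+1)u^ku_x$ in the dual norms dictated by the retarded estimates, e.g. $\|u^ku_x\|_{L^1_xL^2_{yT}}$ and the corresponding $D^s_x$, $D^s_y$ quantities. I would distribute the $k+1$ factors via H\"older's inequality in the mixed space-time norms: $u_x$ into $L^\infty_xL^2_{yT}$, a suitable number of copies of $u$ into $L^{k/2}_xL^\infty_{yT}$ and into $L^{3k/2+}_TL^\infty_{xy}$, with the fractional derivatives $D^s_x$, $D^s_y$ moved onto a single factor by the Kenig--Ponce--Vega fractional Leibniz rule (and Sobolev embedding to turn $D^s$ of $u$ back into an $L^\infty$-type norm). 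Bookkeeping of the H\"older exponents is exactly what forces $k>8$, which among other things makes $k/2>4$ so the ``$u$'' factors fit into the auxiliary spaces; and since $s>s_k$ is strict, a positive power $T^\theta$ of the time length survives, which is what makes $T$ depend only on $\|u_0\|_{H^s}$ instead of requiring small data. The difference estimate for $\Phi(u)-\Phi(v)$ is identical after writing $u^{k+1}-v^{k+1}$ as a telescoping sum.

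The step I expect to be the main obstacle is the sharp linear estimate in (b)--(c) at the exact critical exponent together with its interplay with the nonlinear estimate: there is essentially no slack, so the frequency-localized dispersive bound, the precise choice of auxiliary exponents (including the meaning of the ``$+$'' loss), and the extraction of $T^\theta$ must be carried out simultaneously and made to fit. A secondary technical nuisance is the retarded maximal estimate, since the Christ--Kiselev lemma fails at endpoint exponents and may need to be replaced by a direct argument or by interpolating slightly away from the endpoint.
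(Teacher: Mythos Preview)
Your general architecture (contraction on the Duhamel map, using smoothing, Strichartz, and a maximal estimate together with the fractional Leibniz rule) matches the paper. Where you diverge from the paper is in where you locate the key step. You expect the crux to be a \emph{new} sharp linear estimate at the critical exponent, to be proved via Littlewood--Paley and van der Corput. In the paper nothing of the sort is needed: the observation is simply that for $k>8$ one has $s_k=1-2/k>3/4$, so the \emph{existing} maximal estimate $\|U(t)f\|_{L^4_xL^\infty_{yT}}\lesssim\|f\|_{H^{3/4+}}$ already holds at regularity $H^{s_k+}$. The estimates $\|U(t)f\|_{L^{k/2}_xL^\infty_{yT}}\lesssim\|f\|_{H^{s_k+}}$ and $\|U(t)f\|_{L^{3k/2+}_TL^\infty_{xy}}\lesssim\|f\|_{H^{s_k+}}$ are then obtained by plain interpolation with the Sobolev embedding $\|U(t)f\|_{L^\infty_{Txy}}\lesssim\|f\|_{H^{1+}}$ (and with the $L^3_TL^\infty_{xy}$ Strichartz inequality), while $\|\partial_xU(t)f\|_{L^{3k/(k+2)}_TL^\infty_{xy}}\lesssim\|D^{s_k}_xf\|_{L^2}$ is a direct instance of the Strichartz estimate with $\theta=1$, $\varepsilon=4/k$. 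That is the whole reason $k>8$ enters.

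On the Duhamel side the paper is also more elementary than your plan: there are no retarded or dual estimates and no Christ--Kiselev argument. One simply takes Minkowski in $t$ and uses unitarity of $U(t)$ to reduce each norm in the fixed-point space to the single quantity $\int_0^T\|D^s_x(u^ku_x)\|_{L^2_{xy}}\,dt'$ (and its $D^s_y$ and $L^2$ analogues). The specific splitting $u^ku_x=u^{3k/4}\cdot(u^{k/4}u_x)$, followed by the Leibniz and chain rules, lets the $3k/4$ copies go into $L^{3k/2+}_TL^\infty_{xy}$ (producing the factor $T^\gamma$), the $k/4$ copies into $L^{k/2}_xL^\infty_{yT}$, and $D^s_xu_x$ or $u_x$ into $L^\infty_xL^2_{yT}$; the single factor carrying $D^s_x u$ stays in $L^\infty_TH^s$. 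In particular, the paper does \emph{not} need the $D^s_x$, $D^s_y$ versions of the $L^{k/2}_xL^\infty_{yT}$ and $L^{3k/2+}_TL^\infty_{xy}$ norms that you propose to add to the fixed-point space---the norm is exactly the one in the theorem statement---and your anticipated ``main obstacle'' (endpoint retarded maximal estimate) never arises.
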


The technique to show Theorem \ref{theorem3} will be the one
developed by Kenig, Ponce, and Vega \cite{KPV}, which combines
smoothing effects, Strichartz-type estimates, and a maximal function
estimate together with the Banach contraction principle. One of the
obstacles which prevent us in proving a similar result for $k\leq7$
is that we have a maximal function estimate that holds in
$H^s(\R^2)$ only for $s>3/4$ (see Lemma \ref{lemma1}).

After proving Theorem \ref{theorem3} we turn our attention to the
issue of global well-posedness. As we already mentioned, such
question has already been addressed in \cite{Fa}, \cite{BL},
\cite{LP}, \cite{LP1}. In particular, in \cite{LP} it was proved
that if $k=2$ and $\|u_0\|_{L^2}<\|Q\|_{L^2}$ (where $Q$ is the
ground state solution) then the solution is global in $H^1$ (for
global results below $H^1(\R^2)$, see \cite{LP1}). Also, in
\cite{LP1} was showed if $k\geq3$ and $\|u_0\|_{H^1}$ is small
enough then global well posedness holds in $H^1(\R^2)$. The proof of
this last result is quite standard and relies on conservation laws
and the Gagliardo-Nirenberg inequality,
\begin{equation}  \label{gn}
\int u^{k+2}\;dxdy\leq c \|u\|_{L^2}^2\|\nabla u\|_{L^2}^k,
\end{equation}
to get an a priori estimate. Indeed, first recall that the flow of
the gZK is conserved by the quantities:
\begin{equation}\label{MC}
Mass\equiv M(u(t))=\int u^2(t)\, dxdy
\end{equation}
and
\begin{equation}\label{EC}
Energy\equiv E(u(t))=\dfrac{1}{2}\int|\nabla u(t)|^2\;dxdy
-\dfrac{1}{k+2}\int u^{k+2}(t)\;dxdy,
\end{equation}
where the symbol $\nabla$ stands for the gradient in the space
variables.

Combining \eqref{MC}, \eqref{EC} and \eqref{gn}, we obtain that
\begin{equation}\label{smallg}
\begin{split}
\| u(t)\|_{H^1}^2&= M(u(t))+ 2E(u(t))+\frac{2}{k+2}\int u^{k+2}(t)\;dxdy\\
&\leq M(u_0)+ 2E(u_0)+c\|u_0\|^2_{L^2}\|\nabla u(t)\|_{L^2}^k.
\end{split}
\end{equation}
Denote $X(t)=\|u(t)\|_{H^1}^2$. Since $k\geq3$, we then have
$$
X(t)\leq C(\|u_0\|_{H^1})+c\|u_0\|^2_{L^2}X(t)^{1+\frac{k-2}{2}}.
$$
Thus, if $\|u_0\|_{H^1}$ is small enough, a standard argument leads
to $\|u(t)\|_{H^1}\leq C(\|u_0\|_{H^1})$ for $t\in[0,T]$. Therefore,
we can apply the local theory to extend the solution.

Unfortunately, the above argument does not precise how small the
initial data should be. Here, we study this question and obtain the
following result.

\begin{theorem}\label{global1}
 Let $k\geq3$ and $s_k=1-2/k$.  Assume $u_0\in H^1(\R)$ and suppose that
\begin{equation}\label{GR1}
E(u_0)^{s_k} M(u_0)^{1-s_k} < E(Q)^{s_k} M(Q)^{1-s_k} , \,\,\,
E(u_0) \geq 0.
\end{equation}
If
\begin{equation}\label{GR2}
\|\nabla u_0\|_{L^2}^{s_k}\|u_0\|_{L^2}^{1-s_k} < \|\nabla
Q\|_{L^2}^{s_k}\|Q\|_{L^2}^{1-s_k},
\end{equation}
then for any $t$ as long as the solution exists,
\begin{equation}\label{GR3}
\|\nabla u(t)\|_{L^2}^{s_k}\|u_0\|_{L^2}^{1-s_k}=\|\nabla
u(t)\|_{L^2}^{s_k}\|u(t)\|_{L^2}^{1-s_k} <\|\nabla
Q\|_{L^2}^{s_k}\|Q\|_{L^2}^{1-s_k},
\end{equation}
where $Q$ is the unique positive radial solution of
$$
\Delta Q-Q+Q^{k+1}=0.
$$
This in turn implies that $H^1$ solutions exist globally in time.
\end{theorem}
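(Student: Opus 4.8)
The plan is to couple the conservation laws \eqref{MC}--\eqref{EC} with the \emph{sharp} form of the Gagliardo--Nirenberg inequality \eqref{gn} and a continuity (``trapping'') argument, in the spirit of the sharp global-existence thresholds known for $L^2$-supercritical dispersive equations. The one substantial ingredient is to sharpen \eqref{gn}: I would invoke the classical fact (Weinstein) that the best constant $C_{GN}$ in $\int u^{k+2}\,dxdy\le C_{GN}\|\nabla u\|_{L^2}^k\|u\|_{L^2}^2$ is attained by dilates and translates of the ground state $Q$ with $\Delta Q-Q+Q^{k+1}=0$. Multiplying that equation by $Q$ and by $x\cdot\nabla Q$ and integrating over $\R^2$ gives the two Pohozaev identities $\int Q^{k+2}=\|\nabla Q\|_{L^2}^2+\|Q\|_{L^2}^2$ and $\int Q^{k+2}=\tfrac{k+2}{2}\|Q\|_{L^2}^2$, hence
\[
\|\nabla Q\|_{L^2}^2=\tfrac{k}{2}\|Q\|_{L^2}^2,\qquad E(Q)=\tfrac{k-2}{2k}\|\nabla Q\|_{L^2}^2>0,\qquad C_{GN}\|Q\|_{L^2}^2\|\nabla Q\|_{L^2}^{k-2}=\tfrac{k+2}{k},
\]
the condition $k\ge3$ being used for positivity of $E(Q)$.

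Next I would substitute the sharp inequality into \eqref{EC} and use mass conservation $\|u(t)\|_{L^2}=\|u_0\|_{L^2}$ to get, for every $t$ in the maximal interval of existence,
\[
E(u_0)=E(u(t))\ \ge\ f\bigl(\|\nabla u(t)\|_{L^2}\bigr),\qquad f(x):=\tfrac12 x^2-\tfrac{C_{GN}\|u_0\|_{L^2}^2}{k+2}\,x^{k}.
\]
Here $f(0)=0$, $f$ is increasing on $[0,x_0]$ and decreasing on $[x_0,\infty)$, where $x_0^{\,k-2}=\tfrac{k+2}{k\,C_{GN}\|u_0\|_{L^2}^2}$, and $\max f=f(x_0)=\tfrac{k-2}{2k}x_0^2$. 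Using the identities above one checks the two scale-invariant relations
\[
x_0^{\,k-2}\|u_0\|_{L^2}^2=\|\nabla Q\|_{L^2}^{k-2}\|Q\|_{L^2}^2,\qquad f(x_0)^{s_k}M(u_0)^{1-s_k}=E(Q)^{s_k}M(Q)^{1-s_k}.
\]
Since $s_k>0$, $M(u_0)>0$ and $E(u_0)\ge0$, the first relation says that hypothesis \eqref{GR2} is equivalent to $\|\nabla u_0\|_{L^2}<x_0$, and the second (divide by $M(u_0)^{1-s_k}$, take $s_k$-th roots) that hypothesis \eqref{GR1} is equivalent to $E(u_0)<f(x_0)$. (One may note in passing that \eqref{GR2} already forces $E(u_0)\ge0$, since $f\ge0$ on $[0,x_0]$; this will not be needed.)

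Now the continuity argument: from $0\le E(u_0)<\max f$ there exist $0\le x_1<x_0<x_2$ with $f(x_1)=f(x_2)=E(u_0)$ and $f>E(u_0)$ on $(x_1,x_2)$. The map $t\mapsto\|\nabla u(t)\|_{L^2}$ is continuous, since the solution lies in $C([0,T];H^1)$; by the displayed inequality it takes values in $\{x\ge0:\ f(x)\le E(u_0)\}=[0,x_1]\cup[x_2,\infty)$, and at $t=0$ it equals $\|\nabla u_0\|_{L^2}<x_0<x_2$, so it starts in $[0,x_1]$. Connectedness of the time interval then forces $\|\nabla u(t)\|_{L^2}\le x_1<x_0$ for all $t$, whence
\[
\|\nabla u(t)\|_{L^2}^{k-2}\|u(t)\|_{L^2}^{2}\le x_1^{\,k-2}\|u_0\|_{L^2}^2< x_0^{\,k-2}\|u_0\|_{L^2}^2=\|\nabla Q\|_{L^2}^{k-2}\|Q\|_{L^2}^2,
\]
which is exactly \eqref{GR3} after taking $k$-th roots (recall $ks_k=k-2$). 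Combined with mass conservation this gives the uniform bound $\|u(t)\|_{H^1}^2\le x_0^2+\|u_0\|_{L^2}^2$ throughout the existence interval; since the local theory (Theorems \ref{reviewtheo} and \ref{theorem3}) produces a solution on a time interval whose length depends only on $\|u_0\|_{H^1}$, this a priori bound lets one iterate the local construction with a uniform step and continue the solution to all times.

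The main obstacle is entirely in the first step: correctly identifying the sharp Gagliardo--Nirenberg constant as a ground-state quantity and extracting the exact algebra of $Q$; once this is in hand, the rest is short and soft. A secondary point worth a sentence is that energy conservation and the continuity of $t\mapsto\|\nabla u(t)\|_{L^2}$ are used here for genuine $H^1$ solutions, which is legitimate because the well-posedness theory places the solution in $C([0,T];H^1)$; if only conservation for smooth data were available, one would first establish the bound along a smooth approximating sequence and pass to the limit.
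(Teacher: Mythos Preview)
Your proof is correct and follows essentially the same route as the paper: sharp Gagliardo--Nirenberg with the Weinstein constant, the Pohozaev identities for $Q$ to compute $E(Q)$ and $\|\nabla Q\|_{L^2}$ in terms of $\|Q\|_{L^2}$, and a continuity/trapping argument on $t\mapsto\|\nabla u(t)\|_{L^2}$. The only cosmetic difference is that the paper sets $X(t)=\|\nabla u(t)\|_{L^2}^2$ and analyses $f(x)=x-Bx^{k/2}$, whereas you work with $\|\nabla u(t)\|_{L^2}$ directly and $f(x)=\tfrac12 x^2-\tfrac{C_{GN}\|u_0\|_{L^2}^2}{k+2}x^k$; the algebra and conclusions coincide.
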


To prove Theorem \ref{global1}, we follow closely  our arguments in
\cite{FLP} where we have proved a similar result for the
$L^2$-supercritical generalized KdV equation. We point out that
these results are inspired   by those ones obtained by Kenig and
Merle \cite{kenig-merle} and Holmer and Roudenko \cite{HR08}.

\begin{remark}\label{remark2}
In the limit case $k=2$ (the  modified ZK equation), conditions
\eqref{GR1} and \eqref{GR2} reduce to the same one and it writes as
$$
\|u_0\|_{L^2}<\|Q\|_{L^2}.
$$
Such a condition was already used in \cite{LP} and \cite{LP1} to
show the existence of global solutions, respectively, in $H^1(\R^2)$
and $H^s(\R^2)$, $s>53/63$.
\end{remark}

Once Theorem \ref{global1} is established, we go on studying the
asymptotic behavior of such global solutions as
$t\rightarrow\pm\infty$. We prove that under a smallness condition
the solution scatters to a solution of the linear problem.
Precisely,

\begin{theorem}\label{scattering}
Let $k\geq3$ and $p'=\frac{2(k+1)}{2k+1}$. Assume that $u_0\in
H^1(\R^2)\cap L^{p'}(\R^2)$ satisfies
\begin{equation}\label{smallness}
\|u_0\|_{L^{p'}}+\|u_0\|_{H^1}<\delta
\end{equation}
for some $\delta$ small enough. Let $u(t)$ be the global solution of
\eqref{gzk} given in Theorem \ref{global1}. Then, there exist
$f_{\pm}\in H^1(\R^2)$ such that
\begin{equation}\label{limit}
\|u(t)-U(t)f_{\pm}\|_{H^1}\rightarrow0,
\end{equation}
as $t\rightarrow\pm\infty$.
\end{theorem}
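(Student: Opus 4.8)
The plan is to run the classical small-data scattering scheme (as in Kenig--Ponce--Vega and in our work \cite{FLP}) based on the Duhamel representation and on \emph{global}-in-time linear estimates for the group $U(t)$ associated with $\partial_t+\partial_x\Delta$. Writing
\[
u(t)=U(t)u_0-\int_0^t U(t-t')\,\partial_x\!\big(u(t')^{k+1}\big)\,dt',
\]
the natural asymptotic states are
\[
f_\pm:=u_0-\int_0^{\pm\infty} U(-t')\,\partial_x\!\big(u(t')^{k+1}\big)\,dt',
\]
so that $u(t)-U(t)f_\pm=\int_t^{\pm\infty} U(t-t')\,\partial_x(u^{k+1})\,dt'$. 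Both the fact that $f_\pm\in H^1(\R^2)$ and the limit \eqref{limit} will follow once we know that $t\mapsto\partial_x(u(t)^{k+1})$ belongs to a global space-time norm $\mathcal N$ dual to an inhomogeneous linear estimate of the type $\big\|\int_0^t U(t-t')F\,dt'\big\|_{L^\infty_tH^1(\R^2)}\lesssim\|F\|_{\mathcal N}$: the convergence of the integral defining $f_\pm$ is then a Cauchy-criterion argument on tails, and restricting the $\mathcal N$-norm to $[t,\pm\infty)$ together with dominated convergence yields $\|u(t)-U(t)f_\pm\|_{H^1}\to0$.

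Thus the core of the argument is to upgrade the global $H^1$ solution furnished by Theorem \ref{global1} to one lying, under the smallness hypothesis \eqref{smallness}, in a global function space $Y\subset C(\R;H^1(\R^2))$ carrying the relevant Strichartz and local-smoothing norms, with $\|u\|_Y\lesssim\delta$. This is achieved by re-solving \eqref{gzk} through a direct contraction mapping on $Y$ over the whole real line, mimicking the local well-posedness arguments of \cite{LP},\cite{LP1} and of Theorem \ref{theorem3} but with global-in-time estimates. Two linear ingredients are needed: first, a homogeneous bound $\|U(t)u_0\|_Y\lesssim\|u_0\|_{L^{p'}(\R^2)}+\|u_0\|_{H^1(\R^2)}$ — and here the role of the $L^{p'}$ hypothesis, with $p'=2(k+1)/(2k+1)$ the conjugate exponent of $p=2(k+1)$, is precisely to control the free evolution for large $|t|$ via the dispersive decay $\|U(t)g\|_{L^p(\R^2)}\lesssim|t|^{-(1-2/p)}\|g\|_{L^{p'}(\R^2)}$ together with the $TT^{\ast}$/Christ--Kiselev machinery, the $H^1$ term handling small $|t|$; second, the dual inhomogeneous estimate supplemented by the Kato-type smoothing inequality $\big\|\partial_x\!\int_0^t U(t-t')F\,dt'\big\|_{L^\infty_xL^2_{yt}}\lesssim\|F\|_{L^1_xL^2_{yt}}$, which absorbs the derivative falling on the nonlinearity. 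The nonlinear step is then a Leibniz/Hölder estimate of the form $\|\partial_x(u^{k+1})\|_{\mathcal N}\lesssim\|u\|_Y^{k}\big(\|u\|_{L^\infty_tH^1}+\|u\|_Y\big)$, which closes the scheme since $k\geq3$ makes the right-hand side superlinear in $\|u\|_Y$; uniqueness in $C([0,T];H^1(\R^2))$ identifies the solution so constructed with the one of Theorem \ref{global1}.

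I expect the main obstacle to be the selection of exponents making the two linear steps work \emph{simultaneously}: one must fix a single scale of mixed space-time norms defining $Y$ and $\mathcal N$ so that (a) the free term is controlled by $\|u_0\|_{L^{p'}}+\|u_0\|_{H^1}$ uniformly over all of $\R$, which forces the time-Lebesgue exponent to be compatible both with the $t\to0$ behavior and with the $|t|^{-(1-2/p)}$ tail; (b) the full $\partial_x$ in $\partial_x(u^{k+1})$ is recovered, either by routing that term through the local-smoothing estimate above or by interpolating the decay estimate down to a subcritical derivative level; and (c) $\nabla u$ is propagated within the same framework, so that $f_\pm$ lands in $H^1(\R^2)$ and not merely in $L^2(\R^2)$. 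Once the function spaces are correctly pinned down, the convergence of the integrals defining $f_\pm$ and the scattering limit \eqref{limit} are immediate consequences of the finiteness and smallness of $\|u\|_Y$.
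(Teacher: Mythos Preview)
Your plan is a legitimate scattering scheme, but it is not the one the paper follows. The paper does \emph{not} re-solve the equation in a global space-time norm; it runs the classical Strauss/Ponce--Vega two-step argument. First (Theorem~\ref{decaytheorem}) it proves a pointwise-in-time decay $(1+|t|)^{2\theta/3}\|u(t)\|_{L^{2(k+1)}}\le C$ by applying the dispersive bound $\|U(t)g\|_{L^p}\lesssim|t|^{-2\theta/3}\|g\|_{L^{p'}}$ directly to the Duhamel formula and bootstrapping the quantity $M(T)=\sup_{[0,T]}(1+t)^{2\theta/3}\|u(t)\|_{L^p}$; the $L^{p'}$ hypothesis on $u_0$ seeds the iteration, and the global $H^1$ bound from Theorem~\ref{global1} enters only through $\|\partial_x u\|_{L^2}$ in the H\"older step. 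Second, the $H^1$ scattering is obtained by a weak-to-strong argument: $U(-t)u(t)$ is bounded in $H^1$, hence has a weak limit which Lemma~\ref{lemmat} identifies as $f_-$; the decay forces the potential energy $G(u(t))=\frac{1}{k+2}\int u^{k+2}\to0$ (Lemma~\ref{lemmaG}), and together with mass/energy conservation this gives convergence of the $H^1$ norms, upgrading weak to strong convergence.

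Compared with your route, the paper sidesteps precisely the obstacle you flag---choosing a single global space $Y$ in which the free evolution, the Kato smoothing, and the nonlinear estimate all close---at the price of invoking the conserved quantities to pass from weak to strong convergence in $H^1$. Your approach would yield $f_\pm\in H^1$ and \eqref{limit} directly from the construction, without appealing to conservation, but it requires global-in-time versions of the smoothing and maximal estimates of Lemma~\ref{lemma1}, which in the paper are stated only on finite intervals $[0,T]$; making your conditions (a)--(c) compatible for the ZK group is a genuine piece of work not supplied here.
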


Note that the smallness condition \eqref{smallness} promptly implies
the existence of global solutions in $H^1(\R^2)$. The proof of
Theorem \ref{scattering} is quite standard and it follows closely
the arguments in \cite{PV}, \cite{St}.

\begin{remark} Theorem \ref{scattering} provides scattering
whenever the initial data is small in $H^1(\R^2)$ and in
$L^{p'}(\R^2)$. We do not know if the smallness condition in Theorem
\ref{global1} is sharp in the sense that any global solution given
by Theorem \ref{global1} scatters or not.
\end{remark}

The paper is organized as follows. In section \ref{preliminaries} we
introduce some notation and recall the useful linear estimates to
our arguments. The local and global results, in Theorems
\ref{theorem3} and \ref{global1}, are proved in Sections \ref{LWP}
and  \ref{GWP}, respectively. The concluding section, Section
\ref{scasec}, is devoted to show Theorem \ref{scattering}.

\section{Notation and Preliminaries}\label{preliminaries}

Let us start this section by introducing the basic  notation used
throughout this note. We use $c$ to denote various constants that
may vary line by line. Given any positive numbers $a$ and $b$, the
notation $a \lesssim b$ means that there exists a positive constant
$c$ such that $a \leq cb$. We use $a+$ and $a-$ to denote
$a+\varepsilon$ and $a-\varepsilon$, respectively, for arbitrarily
small $\varepsilon>0$.

For $\alpha \in \mathbb{C}$, the operators $D^\alpha_x$ and
$D^\alpha_y$ are defined via Fourier transform by
$\widehat{D^\alpha_x f}(\xi,
\eta)=|\xi|^\alpha\widehat{f}(\xi,\eta)$ and $\widehat{D^\alpha_y
f}(\xi, \eta)=|\eta|^\alpha\widehat{f}(\xi,\eta)$, respectively. We
use $\|\cdot\|_{L^p}$ and $\|\cdot\|_{H^s}$ to denote the norms in
$L^p(\R^2)$ and $H^s(\R^2)$, respectively. If necessary, we use
subscript to inform which variable we are concerned with. The mixed
space-time norm is defined as (for $1\leq p,q,r<\infty$)
$$
\|f\|_{L^p_xL^q_yL^r_T}= \left( \int_{-\infty}^{+\infty} \left(
\int_{-\infty}^{+\infty} \left( \int_0^T |f(x,y,t)|^r dt
\right)^{q/r} dy \right)^{p/q} dx \right)^{1/p},
$$
with obvious modifications if either $p=\infty$, $q=\infty$ or
$r=\infty$. Norms with interchanged subscript  are similarly
defined. If the subscript $L^r_t$ appears in some norm, that means
one is integrating the variable $t$ on the whole $\R$.

Next we introduce the main tools to prove the local well-posedness.
Consider the linear IVP
\begin{equation}\label{a1}
     \left\{
\begin{array}{lll}
{\displaystyle u_t+\partial_x \Delta u =  0,}  \qquad (x,y) \in \mathbb{R}^2, \,\,\,\, t \in \mathbb{R}, \\
{\displaystyle  u(x,y,0)=u_0(x,y)}.
\end{array}
\right.
\end{equation}
The solution of \eqref{a1} is given by the unitary group
$\{U(t)\}_{t=-\infty}^\infty$ such that
\begin{equation}\label{a2}
u(t)=U(t)u_0(x,y)= \int_{\mathbb{R}^2}
e^{i(t(\xi^3+\xi\eta^2)+x\xi+y\eta)}\widehat{u}_0(\xi,\eta) d\xi
d\eta.\\
\end{equation}

The Smoothing effect of Kato type, the Strichartz estimate, and the
maximal function estimate for solution \eqref{a2} are presented
next.

\begin{lemma}  \label{lemma1}
The following statements hold.
\begin{itemize}
  \item[(i)] {\bf(Smoothing effect)} If $u_0 \in L^2(\mathbb{R}^2)$ then
  \begin{equation}  \label{seffect}
    \|\partial_x U(t)u_0\|_{L^\infty_xL^2_{yT}} \lesssim
    \|u_0\|_{L^2_{xy}}.
  \end{equation}

  \item[(ii)] {\bf(Maximal function)} For any $s>3/4$ and $0< T \leq 1$, we have
\begin{equation}\label{maximal}
\|U(t)f\|_{L^4_xL^\infty_{yT}} \lesssim \|f\|_{H^s_{xy}}.
\end{equation}

  \item[(iii)] {\bf({Strichartz-type estimate})}
Let $0\leq \varepsilon <1/2$ and $0\leq \theta \leq 1$. Then,
\begin{equation}\label{a3}
    \|D^{\theta \varepsilon/2}_x U(t)f \|_{L^q_tL^p_{xy}} \lesssim
    \|f\|_{L^2_{xy}},
\end{equation}
where $p=\frac{2}{1-\theta}$ and
$\frac{2}{q}=\frac{\theta(2+\varepsilon)}{3}$.
\end{itemize}
\end{lemma}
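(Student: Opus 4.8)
The plan is to establish the three estimates separately, each by now-standard oscillatory-integral techniques adapted to the ZK dispersion relation $\phi(\xi,\eta)=\xi^3+\xi\eta^2$; parts (ii) and (iii) are essentially due to Linares--Pastor \cite{LP} and Faminskii \cite{Fa}, and I would either quote them or reprove them along the lines below.

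For the smoothing effect (i), I would argue by Plancherel after a change of variables. Fix $x\in\R$ and regard $\partial_x U(t)u_0(x,\cdot,\cdot)$ as a function of $(y,t)$. Since for each fixed $\eta$ the map $\xi\mapsto\phi(\xi,\eta)$ is a strictly increasing bijection of $\R$ (because $\partial_\xi\phi=3\xi^2+\eta^2>0$), I substitute $\mu=\phi(\xi,\eta)$, with $d\mu=(3\xi^2+\eta^2)\,d\xi$, so that, from \eqref{a2}, $\partial_x U(t)u_0(x,y)$ becomes a two-dimensional Fourier integral in the dual pair $(t,y)\leftrightarrow(\mu,\eta)$. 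Applying Plancherel in $(y,t)$, undoing the substitution, and using $\xi^2/(3\xi^2+\eta^2)\le 1/3$ yields $\int_\R\int_0^T|\partial_x U(t)u_0(x,y)|^2\,dt\,dy\lesssim\|u_0\|_{L^2}^2$ with a constant independent of $x$ (the surviving factor $e^{ix\xi(\mu,\eta)}$ has modulus one), which is \eqref{seffect}; the restriction to $[0,T]$ is free since the bound is global in $t$.

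For the Strichartz-type estimate (iii), the idea is to interpolate between a non-dispersive endpoint and a dispersive one. The case $\theta=0$ is just the conservation law $\|U(t)f\|_{L^\infty_tL^2_{xy}}=\|f\|_{L^2}$. For the dispersive input I would freeze the $\eta$-variable: for fixed $\eta$ the phase $t(\xi^3+\xi\eta^2)+x\xi$ is a translated Airy phase in $\xi$, so van der Corput's lemma gives $|t|^{-1/3}$ decay in $x$ for the corresponding one-dimensional propagator; combining this with the $L^2$ bound, the one-dimensional Strichartz machinery (as in Kenig--Ponce--Vega for KdV) produces a space-time estimate in $(x,t)$ that is uniform in $\eta$, and integrating back in $\eta$ via Plancherel and Minkowski's inequality (legitimate since $p\ge2$) gives \eqref{a3}; the fractional derivative $D^{\theta\varepsilon/2}_x$ is the usual gain obtained on interpolating the two endpoints.

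The delicate point is the maximal function estimate (ii), which I expect to be the main obstacle and which is exactly why the local theory is confined to $s>3/4$. Here I would decompose $f$ into Littlewood--Paley pieces $P_jf$ with $|\xi|\sim 2^j$ and estimate $\|U(t)P_jf\|_{L^4_xL^\infty_{yT}}$ block by block: on a single block one controls the supremum in $y$ and in $t\in[0,T]$ by combining a Sobolev embedding in the $\eta$-variable (which costs more than half a derivative in $y$, hence the threshold $3/4$) with the one-dimensional $L^4_xL^\infty_t$ maximal estimate of Kenig--Ponce--Vega for the Airy group in the $x$-variable, obtaining a bound of the form $\lesssim 2^{3j/4}\|P_jf\|_{L^2}$ up to an arbitrarily small power of $2^j$; summing the geometric series then requires $s>3/4$. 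I expect the technical heart to be making the coupling of the $x$- and $y$-frequencies in this localized estimate precise and uniform in $T\le 1$; the unavoidable loss in the $\eta$-direction is precisely what obstructs pushing (ii)---and hence Theorem \ref{theorem3}---down to the scaling index $s_k$ when $k\le 7$.
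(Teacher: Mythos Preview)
Your sketches are correct and capture the standard proofs, but note that the paper itself does not prove this lemma: it simply cites Faminskii \cite{Fa} for (i), Linares--Pastor \cite{LP} for (ii), and Linares--Pastor \cite{LP1} for (iii). Your attribution is slightly off---you assign (iii) partly to Faminskii, whereas the smoothing effect (i) is what comes from \cite{Fa}, and (iii) is from \cite{LP1}---but the mathematical content of your outlines matches what those references do. In particular, your Plancherel/change-of-variables argument for (i) is exactly the classical route; your plan for (iii) via freezing $\eta$ and reducing to the Airy propagator is equivalent to deriving it from the $L^{p'}\to L^p$ dispersive estimate (stated here as Proposition~\ref{prop1}) by $TT^*$; and your Littlewood--Paley/KPV-maximal approach to (ii), with the $1/4$ loss from the Airy $L^4_xL^\infty_t$ estimate plus $1/2+$ from Sobolev in $y$, correctly explains the $3/4$ threshold.
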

\begin{proof}
The proof (i) is given in \cite[Theorem 2.2]{Fa} while proofs of
(ii) and (iii) are given, respectively, in \cite[Proposition
2.4]{LP} and \cite[Corollary 2.7]{LP1}.
\end{proof}

With Lemma \ref{lemma1} at hand, we prove the following.

\begin{proposition}\label{prop0}
Let $s_k=1-2/k$ and $0<T\leq1$. Then, for any $k>8$,
\begin{itemize}
\item [(i)] $\|U(t)f\|_{L^{k/2}_xL^{\infty}_{yT}}\lesssim
\|f\|_{H^{s_k+}_{xy}}$,
\item [(ii)] $\|U(t)f\|_{L^{3k/2+}_TL^{\infty}_{xy}}\lesssim
\|f\|_{H^{s_k+}_{xy}}$,
\item [(iii)] $\|\partial_xU(t)f\|_{L^{3k/(k+2)}_TL^{\infty}_{xy}}\lesssim \|D^{s_k}_xf\|_{L^{2}_{xy}}$.
\end{itemize}
\end{proposition}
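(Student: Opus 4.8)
The plan is to obtain all three bounds from Lemma~\ref{lemma1} by pairing the linear estimates listed there with the trivial Sobolev bound and (vector‑valued) interpolation. Since $U(t)$ is unitary on every $H^s(\R^2)$ and $H^{1+}(\R^2)\hookrightarrow L^\infty(\R^2)$ in two dimensions, one has for free
$\|U(t)f\|_{L^\infty_TL^\infty_{xy}}\le\sup_{t}\|U(t)f\|_{L^\infty_{xy}}\lesssim\sup_t\|U(t)f\|_{H^{1+}_{xy}}=\|f\|_{H^{1+}_{xy}}$, and likewise $\|U(t)f\|_{L^\infty_tL^\infty_{xy}}\lesssim\|f\|_{H^{1+}_{xy}}$. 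Also, for any exponent $q$, restricting the time integration from $\R$ to $[0,T]$ only decreases the norm, so estimates proved with $L^q_t$ immediately yield the corresponding $L^q_T$ statements (with room to spare for the ``$+$'' in $q$). The hypothesis $k>8$ will enter exactly to place the interpolation parameters strictly inside $(0,1)$ and to keep the Strichartz parameter $\varepsilon$ below $1/2$.

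For (iii) I would argue directly from \eqref{a3}. Take $\theta=1$, which forces $p=\infty$, and choose $\varepsilon=4/k$; this lies in $[0,1/2)$ precisely because $k>8$, gives $\theta\varepsilon/2=2/k$ and $q=6/(2+\varepsilon)=3k/(k+2)$, so that $\|D^{2/k}_xU(t)g\|_{L^{3k/(k+2)}_tL^\infty_{xy}}\lesssim\|g\|_{L^2_{xy}}$. To put $\partial_x$ in front of $U(t)f$, write $\widehat{\partial_x U(t)f}(\xi,\eta)=i\xi\,\widehat{U(t)f}(\xi,\eta)$ and factor $i\xi=|\xi|^{2/k}\cdot\big(i\,\sgn(\xi)|\xi|^{1-2/k}\big)$; letting $g$ be the function with $\widehat g(\xi,\eta)=i\,\sgn(\xi)|\xi|^{1-2/k}\widehat f(\xi,\eta)$, one gets $\partial_xU(t)f=D^{2/k}_xU(t)g$ and $\|g\|_{L^2_{xy}}=\|D^{1-2/k}_xf\|_{L^2_{xy}}=\|D^{s_k}_xf\|_{L^2_{xy}}$. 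Restricting to $[0,T]$ gives (iii). The only subtlety here is that the unimodular factor $i\,\sgn(\xi)|\xi|^{1-2/k}$ is harmless because it is absorbed into $g$ at the $L^2$ level, so no Hilbert‑transform bound on $L^\infty$ is needed.

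For (ii) I would interpolate the degenerate case $\theta=1$, $\varepsilon=0$ of \eqref{a3}, namely $\|U(t)f\|_{L^3_tL^\infty_{xy}}\lesssim\|f\|_{L^2_{xy}}$, against the trivial bound $\|U(t)f\|_{L^\infty_tL^\infty_{xy}}\lesssim\|f\|_{H^{1+}_{xy}}$. Interpolating the time exponent with weight $\alpha=1-2/k$ produces $L^q_t$ with $1/q=(1-\alpha)/3=2/(3k)$, i.e. $q=3k/2$, and regularity index $(1-\alpha)\cdot0+\alpha\cdot1=1-2/k=s_k$ up to an arbitrarily small loss, hence $H^{s_k+}_{xy}$; passing from $L^{3k/2}_t$ to $L^{3k/2+}_T$ finishes (ii).

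For (i) I would interpolate the maximal function estimate \eqref{maximal}, $\|U(t)f\|_{L^4_xL^\infty_{yT}}\lesssim\|f\|_{H^{3/4+}_{xy}}$, against $\|U(t)f\|_{L^\infty_xL^\infty_{yT}}\le\|U(t)f\|_{L^\infty_{xyT}}\lesssim\|f\|_{H^{1+}_{xy}}$. Interpolating the $x$‑exponent with weight $\alpha=1-8/k$ (again in $(0,1)$ exactly when $k>8$) gives $L^p_x$ with $1/p=(1-\alpha)/4=2/k$, i.e. $p=k/2$, while the regularity index becomes $(1-\alpha)\cdot\tfrac34+\alpha\cdot1=\tfrac34+\tfrac\alpha4=1-\tfrac2k=s_k$, up to the small loss, i.e. $H^{s_k+}_{xy}$; this is (i). Across (i) and (ii) the one point genuinely requiring care — and the main ``obstacle,'' modest as it is — is justifying the vector‑valued interpolation identities $[L^{p_0}_x(L^\infty_{yT}),L^\infty_x(L^\infty_{yT})]_\alpha=L^{p_\alpha}_x(L^\infty_{yT})$ and $[L^3_t(L^\infty_{xy}),L^\infty_t(L^\infty_{xy})]_\alpha=L^{q_\alpha}_t(L^\infty_{xy})$, which is standard complex interpolation of Bochner spaces over $\sigma$‑finite measure spaces, together with $[H^{s_0},H^{s_1}]_\alpha=H^{s_\alpha}$; all the rest is the bookkeeping of choosing the parameters so as to land exactly at the scaling exponent $s_k=1-2/k$.
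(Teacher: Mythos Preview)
Your proposal is correct and follows essentially the same route as the paper: (iii) is obtained directly from the Strichartz estimate \eqref{a3} with $\theta=1$, $\varepsilon=4/k$; (ii) by interpolating the $L^3_tL^\infty_{xy}$ Strichartz endpoint against the trivial Sobolev bound $\|U(t)f\|_{L^\infty_{Txy}}\lesssim\|f\|_{H^{1+}}$; and (i) by interpolating the maximal function estimate \eqref{maximal} against that same Sobolev bound. Your treatment of (i) is in fact slightly sharper than the paper's wording---you take the maximal estimate at the optimal $H^{3/4+}$ level so that the interpolated regularity lands \emph{exactly} at $s_k$, whereas the paper quotes it at $H^{s_k+}$---and your explicit discussion of the vector-valued interpolation and the factorization $i\xi=|\xi|^{2/k}\cdot i\,\sgn(\xi)|\xi|^{s_k}$ in (iii) makes the argument more self-contained.
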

\begin{proof}
Inequality (i) follows interpolating the Sobolev embedding
\begin{equation}\label{sob}
\|U(t)f\|_{L^{\infty}_{Txy}}\lesssim \|f\|_{H^{1+}}
\end{equation}
with the maximal function estimate
$\|U(t)f\|_{L^{4}_xL^{\infty}_{yT}}\leq \|f\|_{H^{s_k+}}$ (see
\eqref{maximal}). To prove (ii) we first take $\varepsilon=0$ and
$\theta=1$ in \eqref{a3} to get
$\|U(t)f\|_{L^{3}_TL^{\infty}_{xy}}\lesssim \|f\|_{L^{2}}$. Thus
(ii) follows interpolating such inequality with \eqref{sob}.
Estimate (iii) is a particular case of \eqref {a3} just taking
$\theta=1$ and $\varepsilon=4/k$.

\end{proof}

Finally, we also recall the Chain rule and Leibniz rule for
fractional derivatives.

\begin{proposition} {\bf(Chain rule)}  \label{chain}
Let $1<p<\infty$, $r>1$, and $h\in L_{\rm loc}^{rp}(\R)$. Then
\begin{equation*}
\|D^\alpha_z F(f)h\|_{L^p_z(\R)}\lesssim
\|F'(f)\|_{L_z^\infty(\R)}\|D^\alpha_z(f)M(h^{rp})^{1/{rp}}\|_{L^p_z(\R)},
\end{equation*}
where $M$ denotes the Hardy-Littlewood maximal function.
\end{proposition}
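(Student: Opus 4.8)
The plan is to reduce to the non-degenerate case and then combine the difference-kernel representation of $D^\alpha_z$ with $A_p$-weight theory. We may assume $0<\alpha<1$, which is the range relevant here; that $F(0)=0$, since subtracting a constant changes neither $D^\alpha_z F(f)$ nor $F'$; that $f\in\mathcal{S}(\R)$, passing to general $f$ afterwards by density; and that $M(h^{rp})<\infty$ a.e., since otherwise the right-hand side of the asserted inequality is infinite and there is nothing to prove.

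The second step disposes of $h$ cheaply. By the Lebesgue differentiation theorem, $|h(z)|\le M(h^{rp})(z)^{1/(rp)}$ for a.e.\ $z$, so with the weight $w:=M(h^{rp})^{1/r}$ one has
\[
\|D^\alpha_z F(f)\,h\|_{L^p_z}\le\|D^\alpha_z F(f)\|_{L^p_z(w)}\qquad\text{and}\qquad\|(D^\alpha_z f)\,M(h^{rp})^{1/(rp)}\|_{L^p_z}=\|D^\alpha_z f\|_{L^p_z(w)}.
\]
Since $r>1$, i.e.\ $1/r<1$, the Coifman--Rochberg theorem guarantees that $w=M(h^{rp})^{1/r}$ is an $A_1$ (hence $A_p$) weight with characteristic bounded by an absolute constant. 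Thus the proposition is reduced to the \emph{weighted chain rule}
\[
\|D^\alpha_z F(f)\|_{L^p_z(w)}\lesssim\|F'(f)\|_{L^\infty_z}\,\|D^\alpha_z f\|_{L^p_z(w)},\qquad w\in A_p,
\]
with implicit constant depending only on $p$ and on the $A_p$ characteristic of $w$.

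To prove this weighted estimate I would start from the pointwise representation $D^\alpha_z g(z)=c_\alpha\int_\R (g(z)-g(z-y))\,|y|^{-1-\alpha}\,dy$, valid for $0<\alpha<1$ and $g\in\mathcal{S}(\R)$, together with the elementary bound $|F(f(z))-F(f(z-y))|\le\|F'(f)\|_{L^\infty_z}\,|f(z)-f(z-y)|$ from the fundamental theorem of calculus. The delicate point is that one may \emph{not} simply pass the absolute value inside the $y$-integral and dominate by $\|F'(f)\|_{L^\infty_z}M(D^\alpha_z f)(z)$: that pointwise estimate is genuinely false, losing a factor that grows with the number of active frequency scales of $f$, so the cancellation in the $y$-integral must be retained. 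The cleanest route is a Littlewood--Paley / paraproduct decomposition of $F(f)$, in which the diagonal piece reproduces the main term $F'(f)\,D^\alpha_z f$, controlled in $L^p_z(w)$ by $\|F'(f)\|_{L^\infty_z}\|D^\alpha_z f\|_{L^p_z(w)}$, while the off-diagonal and remainder pieces are dominated pointwise by $\|F'(f)\|_{L^\infty_z}M(D^\alpha_z f)(z)$, after which the weighted Hardy--Littlewood maximal inequality ($\|M(\cdot)\|_{L^p(w)}\lesssim\|\cdot\|_{L^p(w)}$ for $w\in A_p$) closes the argument. Equivalently, one may invoke the chain-rule estimate of Christ and Weinstein (see also Kenig, Ponce and Vega \cite{KPV}), which is exactly of this form.

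The main obstacle is precisely this last step: controlling the off-diagonal and remainder terms so as to recover the maximal function on the right-hand side, since the triangle inequality applied directly to the difference kernel is too lossy and genuine cancellation (via Littlewood--Paley, or via the Christ--Weinstein argument) is required. Everything else --- the reductions, the production of $M(h^{rp})^{1/(rp)}$ via Coifman--Rochberg, and the closing use of the weighted maximal inequality --- is soft.
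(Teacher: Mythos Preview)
The paper gives no proof beyond the citation ``See Kenig, Ponce, and Vega \cite[Theorem A.7]{KPV}.'' Your outline is correct and, at its core, reduces to precisely that result: your Coifman--Rochberg step cleanly explains why the factor $M(h^{rp})^{1/(rp)}$ appears (it is what turns the $h$-weighted estimate into an $A_p$-weighted one), and for the remaining weighted chain rule you yourself invoke the Christ--Weinstein/KPV argument. So your proposal is not a different route so much as an unpacking of what the cited theorem contains; the one honest caveat you flag --- that the off-diagonal/remainder control requires genuine cancellation and cannot be obtained by a naive triangle inequality on the difference kernel --- is exactly the content supplied by \cite{KPV}.
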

\begin{proof}
See Kenig, Ponce, and Vega \cite[Theorem A.7]{KPV}.\\
\end{proof}

\begin{lemma} {\bf (Leibniz rule)}  \label{lemmalei}
Let $0<\alpha<1$ and $1<p<\infty$. Then,
$$
\|D^\alpha_z(fg)-fD^\alpha_z g -gD^\alpha_z f \|_{L_z^p(\mathbb{R})}
\lesssim \|g\|_{L_z^\infty(\mathbb{R})} \|D_z^\alpha f
\|_{L_z^p(\mathbb{R})}.
$$
\end{lemma}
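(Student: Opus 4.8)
The plan is to prove the Leibniz rule for fractional derivatives by following the standard Coifman--Meyer/Kato--Ponce circle of ideas, which is also carried out in the appendix of Kenig--Ponce--Vega \cite{KPV}. First I would reduce the statement to an estimate for the bilinear operator whose symbol, after removing the ``diagonal'' pieces $fD^\alpha_z g$ and $gD^\alpha_z f$, is the smooth, homogeneous-of-degree-$\alpha$ multiplier
\[
m(\xi,\eta)=|\xi+\eta|^\alpha-|\xi|^\alpha-|\eta|^\alpha,
\]
and to observe that this symbol is well behaved precisely because $0<\alpha<1$: it vanishes to the appropriate order on the lines $\xi=0$ and $\eta=0$, so $m$ is a classical Coifman--Meyer-type symbol of order $\alpha$ away from the origin. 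One then localizes dyadically in the frequencies $|\xi|\sim 2^j$, $|\eta|\sim 2^\ell$ and splits into the regimes $j\ll\ell$, $j\gg\ell$, and $j\sim\ell$. In the first two (the ``low-high'' paraproduct pieces), the cancellation built into $m$ means the contribution is controlled by $\|g\|_{L^\infty}\|D^\alpha_z f\|_{L^p}$ (resp.\ the symmetric term), and in the ``high-high'' regime the $\alpha<1$ gain lets one sum the dyadic pieces after placing one factor in $L^\infty$ and the other, with $\alpha$ derivatives, in $L^p$.

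The technical backbone of each piece is a combination of the Littlewood--Paley square function estimate in $L^p$ (valid for $1<p<\infty$, which is exactly the hypothesis), Bernstein's inequality to trade frequency localization for $L^\infty$ control, and a vector-valued maximal inequality (Fefferman--Stein) to handle the shifted pieces coming from the paraproduct decomposition. Concretely, for the low-high term I would write $g$ in terms of its Littlewood--Paley pieces $g=\sum_\ell \Delta_\ell g$, pair each with the portion of $f$ at frequency $\gtrsim 2^\ell$, use the smoothness and homogeneity of $m$ to write the resulting operator as an average of modulated Calder\'on--Zygmund kernels, and then bound $\|\{\Delta_\ell g\}\|_{\ell^\infty}$ pointwise by $\|g\|_{L^\infty}$ while the $f$-pieces reassemble via the square function into $\|D^\alpha_z f\|_{L^p}$. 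The high-high term is slightly different: there one cannot avoid summing in a single index, but since the symbol only grows like $2^{\alpha j}$ with $\alpha<1$ one still closes the sum, again putting $g$ (or $f$) in $L^\infty$ via Bernstein and the other factor in $L^p$ with its $\alpha$ derivatives.

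The main obstacle — and the point where one has to be careful rather than merely routine — is the high-high interaction: there the output frequency can be much smaller than either input frequency, the naive paraproduct bookkeeping breaks down, and one must genuinely exploit that $m$ is a symbol of order $\alpha<1$ (rather than order $1$, which is the borderline Kato--Ponce case where the endpoint fails) to recover summability after the square-function step. A clean way to organize this is to absorb the output Littlewood--Paley projection into the estimate, use duality to move it onto the other side, and then invoke the Fefferman--Stein maximal inequality once more. Since all of this is carried out in detail in \cite[Theorem~A.8 and its proof]{KPV}, I would present the argument in the compressed form above and refer there for the remaining routine verifications; the only thing worth emphasizing in the write-up is the role of the hypotheses $0<\alpha<1$ and $1<p<\infty$, which enter exactly at the two places just identified (the cancellation of $m$ and the boundedness of the Littlewood--Paley square function and maximal operator on $L^p$).
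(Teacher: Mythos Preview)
Your sketch is correct and in fact supplies far more detail than the paper does: the paper's entire proof is the single line ``See Kenig, Ponce, and Vega \cite[Theorem~A.12]{KPV}.'' Since both you and the paper ultimately defer to the appendix of \cite{KPV}, there is no genuine difference in approach; the only discrepancy is the theorem number (you point to A.8, the paper to A.12).
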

\begin{proof}
See Kenig, Ponce, and Vega \cite[Theorem A.12]{KPV}.\\
\end{proof}

\section{Local Well-posedness: Proof of Theorem \ref{theorem3}}\label{LWP}

As usual, we consider the integral operator
\begin{equation}\label{Psi}
\Psi(u)(t)=\Psi_{u_0}(u)(t):=U(t)u_0+\int_0^t
U(t-t')\partial_x(u^{k+1})(t')dt',
\end{equation}
and define the metric spaces
$$
\mathcal{Y}_T=\{ u \in C([0,T];H^s(\mathbb{R}^2)); \,\,\,\, \trinorm
u \trinorm <\infty \}
$$
and
$$
\mathcal{Y}_T^a=\{ u \in \mathcal{X}_T; \,\,\,\, \trinorm u \trinorm
\leq a \},
$$
with
\begin{equation*}
\begin{split}
\trinorm u \trinorm:= \|u\|_{L^\infty_TH^s_{xy}}+
\|u\|_{L^{3k/2+}_TL^\infty_{xy}}& + \|u\|_{L^{k/2}_xL^\infty_{yT}} +
\|u_x\|_{L^{3k/(k+2)}_TL^\infty_{xy}} \\
&+\|u_x\|_{L^\infty_xL^2_{yT}} + \|D^s_xu_x\|_{L^\infty_xL^2_{yT}} +
\|D^s_yu_x\|_{L^\infty_xL^2_{yT}},
\end{split}
\end{equation*}
where $a,T>0$ will be chosen later. We assume that $s_k<s<1$ and
$T\leq 1$.

First we estimate the $H^s$-norm of $\Psi(u)$. Let $u\in
\mathcal{Y}_T$. By using Minkowski's inequality, group properties
and then H\"older's inequality, we have
\begin{equation}\label{b5}
\begin{split}
 \|\Psi(u)(t) \|_{L^2_{xy}} & \lesssim  {\displaystyle  \|u_0\|_{H^s}+
 \int_0^T \|u^{3k/4}\|_{L^{\infty}_{xy}} \|u^{k/4}u_x\|_{L^2_{xy}} dt'  } \\
& \lesssim  \|u_0\|_{H^s}+   \|u^{3k/4}\|_{L^2_TL^{\infty}_{xy}}
 \|u^{k/4}u_x\|_{L^2_{xyT}}\\
& \lesssim \|u_0\|_{H^s}+  T^{\gamma}
\|u\|_{L^{3k/2+}_TL^{\infty}_{xy}}
\|u\|^{k/4}_{L^{k/2}_xL^\infty_{yT}}\|u_x\|_{L^{\infty}_xL^{2}_{yT}},
\end{split}
\end{equation}
where $\gamma >0$ is an arbitrarily small number.

On the other hand, using group properties and Minkowski's
inequality, we have
\begin{equation}   \label{b6}
 \|D^s_x \Psi(u)(t) \|_{L^2_{xy}}  \lesssim  {\displaystyle  \|D^s_xu_0\|_{L^2_{xy}}+\int_0^T \|D^s_x(u^ku_x)\|_{L^2_{xy}} dt'} =
 {\displaystyle c \|u_0\|_{H^s}+ A_0. }
\end{equation}

Applying Leibniz rule for fractional derivatives (see Lemma
\ref{lemmalei}) and H\"older's inequality, we get
\begin{equation}\label{b7}
\begin{split}
A_0& \lesssim \int_0^T
 \|u^{3k/4}\|_{L^\infty_{xy}} \|D_x^s(u^{k/4}u_x)\|_{L^2_{xy}}dt' +
\int_0^T
 \|D_x^s(u^{3k/4})u^{k/4}u_x)\|_{L^2_{xy}}dt'\\
&= A_1+A_2.
\end{split}
\end{equation}

Moreover,
\begin{equation}\label{b8}
\begin{split}
A_1&\lesssim \int_0^T
 \|u^{3k/4}\|_{L^\infty_{xy}}\|u_x\|_{L^\infty_{xy}}\|D_x^su^{k/4}\|_{L^2_{xy}} dt'+
\int_0^T\|u^{3k/4}\|_{L^\infty_{xy}}\|u^{k/4}D_x^su_x\|_{L^2_{xy}}dt'\\
&= A_{11}+A_{12}.
\end{split}
\end{equation}

First we consider the term $A_{11}$. Thus, applying H\"older's
inequality, Lemma \ref{chain} (with $h=1$) and H\"older's inequality
again, we have
\begin{equation}\label{b9}
\begin{split}
A_{11}  & \lesssim  \|u\|^{3k/4}_{L^{3k/2}_TL^\infty_{xy}} \|\|u_x\|_{L^\infty_{xy}} \|D_x^su^{k/4}\|_{L^2_{xy}}\|_{L^2_{T}}\\
&\lesssim  \|u\|^{3k/4}_{L^{3k/2}_TL^\infty_{xy}} \|\|u_x\|_{L^\infty_{xy}}\|u^{k/4-1}\|_{L^{\infty}_{xy}} \|D_x^su\|_{L^2_{xy}}\|_{L^2_{T}}\\
&\lesssim \|u\|^{3k/4}_{L^{3k/2}_TL^\infty_{xy}} \|u_x\|_{L^{3k/(k+2)}_TL^\infty_{xy}}\|u\|^{(k-4)/4}_{L^{3k/2}_TL^\infty_{xy}} \|u\|_{L^{\infty}_TH^s_{xy}}\\
&\lesssim T^{\gamma}\|u\|^{k-1}_{L^{3k/2+}_TL^\infty_{xy}} \|u_x\|_{L^{3k/(k+2)}_TL^\infty_{xy}}\|u\|_{L^{\infty}_TH^s_{xy}}.\\
\end{split}
\end{equation}

To bound $A_{12}$ we just apply H\"older's inequality twice to obtain
\begin{equation}\label{b10}
\begin{split}
A_{12}  & \lesssim  \|u\|^{3k/4}_{L^{3k/2}_TL^\infty_{xy}} \|D^s_xu_x\|_{L^\infty_{x}L^{2}_{yT}} \|u\|^{k/4}_{L^{k/2}_{x}L^{\infty}_{yT}}\\
& \lesssim  T^{\gamma}\|u\|^{3k/4}_{L^{3k/2+}_TL^\infty_{xy}} \|D^s_xu_x\|_{L^\infty_{x}L^{2}_{yT}} \|u\|^{k/4}_{L^{k/2}_{x}L^{\infty}_{yT}}.\\
\end{split}
\end{equation}

Next we consider the term $A_2$. Lemma \ref{chain} (with $h=1$) and
H\"older's inequality yield
\begin{equation}\label{b11}
\begin{split}
A_{2}  & \lesssim \int_0^T
 \|D_x^s(u^{3k/4})\|_{L^2_{xy}}\|u^{k/4}u_x\|_{L^{\infty}_{xy}}dt'\\
& \lesssim \int_0^T
 \|u^{3k/4-1}\|_{L^{\infty}_{xy}} \|D_x^su\|_{L^2_{xy}}\|u\|_{L^{\infty}_{xy}}^{k/4}\|u_x\|_{L^{\infty}_{xy}}dt'\\
 & \lesssim  \|u\|^{k-1}_{L^{3k/2}_TL^\infty_{xy}} \|u_x\|_{L^{3k/(k+2)}_TL^\infty_{xy}}\|u\|_{L^{\infty}_TH^s_{xy}}\\
& \lesssim  T^{\gamma}\|u\|^{k-1}_{L^{3k/2+}_TL^\infty_{xy}} \|u_x\|_{L^{3k/(k+2)}_TL^\infty_{xy}}\|u\|_{L^{\infty}_TH^s_{xy}}.\\
\end{split}
\end{equation}

A similar analysis can be carried out to estimate the norm $\|D^s_y \Psi(u)(t) \|_{L^2_{xy}}$. Therefore, from \eqref{b5}-\eqref{b11}, we deduce
\begin{equation}\label{b12}
    \|\Psi(u)\|_{L^\infty_TH^s} \leq  c \|u_0\|_{H^s} +  cT^{\gamma} \trinorm u
    \trinorm^{k+1}.
\end{equation}

The remaining norms are estimated similarly. Indeed, by combining
the linear estimates (i)-(iii) in Proposition \ref{prop0}, Lemma
\ref{lemma1} (i), and group properties it is easy to see that all
the problem reduces to the estimation of $A_0$. Therefore, we infer
$$
\trinorm \Psi(u)\trinorm \leq  c\|u_0\|_{H^s} + cT^{\gamma}
\trinorm u
    \trinorm^{k+1}.
$$
Choose $a=2c \|u_0\|_{H^s}$, and $T>0$ such that
$$
c\,a^k T^{\gamma} \leq \frac{1}{4}.
$$
Then, we see that $\Psi:\mathcal{Y}_T^a \mapsto \mathcal{Y}_T^a$ is
well defined. Moreover, similar arguments show that $\Psi$ is a
contraction. To finish the proof we use standard arguments, thus, we
omit the details. This completes the proof of Theorem
\ref{theorem3}.

\section{Global Well-posedness: Proof of Theorem \ref{global1}}\label{GWP}

We first note that from the discussion in \eqref{smallg} the
smallness condition on $\|u_0\|_{H^1}$ should be closely related to
the constant appearing in  the Gagliardo-Nirenberg inequality
\eqref{gn}. Thus, let us recall the classical result obtained by
Weinstein \cite{W83}, regarding the best constant for the
Gagliardo-Nirenberg inequality.

\begin{theorem}\label{best}
Let $k>0$,  then the Gagliardo-Nirenberg inequality
\begin{equation}\label{g-n}
\|u\|_{L^{k+2}}^{k+2}\le K_{\rm opt}^{k+2}\,\|\nabla u\|_{L^2}^{k}\|u\|_{L^2}^{2},
\end{equation}
holds, and the sharp constant $K_{\rm opt}>0$ is
explicitly given by
\begin{equation}\label{opt}
K_{\rm opt}^{k+2}=\frac{k+2}{2\|\psi\|_{L^2}^k},
\end{equation}
where $\psi$ is the unique non-negative, radially-symmetric, decreasing solution of the equation
\begin{equation}\label{ground}
\frac{k}{2}\Delta \psi-\psi+\psi^{k+1}=0.
\end{equation}
\end{theorem}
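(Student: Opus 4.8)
The plan is to identify the sharp constant through a variational characterization. Introduce the Weinstein functional
\[
J(u)=\frac{\|\nabla u\|_{L^2}^{k}\,\|u\|_{L^2}^{2}}{\|u\|_{L^{k+2}}^{k+2}},\qquad u\in H^1(\R^2)\setminus\{0\},
\]
and note that \eqref{g-n} holding with a finite constant is equivalent to $J_{\min}:=\inf_u J(u)>0$, in which case necessarily $K_{\rm opt}^{k+2}=J_{\min}^{-1}$. A preliminary observation is that $J$ is invariant under the two-parameter scaling $u\mapsto \mu\,u(\lambda\,\cdot)$ with $\mu,\lambda>0$ — equivalently, both sides of \eqref{g-n} have the same homogeneity in $u$ and, in two space dimensions, are unchanged by dilations. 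This invariance is the source of the difficulty but also lets us normalize any extremizer at will.

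To produce an extremizer I would take a minimizing sequence $\{u_n\}\subset H^1(\R^2)$ with $J(u_n)\to J_{\min}$ and, using the scaling, arrange $\|\nabla u_n\|_{L^2}=\|u_n\|_{L^2}=1$, so that $\|u_n\|_{L^{k+2}}^{k+2}\to J_{\min}^{-1}$. Since $J$ is positive and finite on every nonzero element of $H^1(\R^2)$ one has $J_{\min}^{-1}>0$, and the ordinary Sobolev embedding $H^1(\R^2)\hookrightarrow L^{k+2}(\R^2)$ (which holds for all $k>0$) bounds $\|u_n\|_{L^{k+2}}$, so $J_{\min}>0$ and $\{u_n\}$ is bounded in $H^1$. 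The main obstacle is the loss of compactness caused exactly by the dilation and translation invariance; I would bypass it by Schwarz symmetrization. Replacing $u_n$ by its radially symmetric nonincreasing rearrangement $u_n^{*}$ leaves $\|u_n\|_{L^2}$ and $\|u_n\|_{L^{k+2}}$ unchanged and, by the P\'olya--Szeg\H{o} inequality, does not increase $\|\nabla u_n\|_{L^2}$, so we may assume each $u_n$ is radial and nonincreasing while still $J(u_n)\to J_{\min}$ and $\{u_n\}$ is $H^1$-bounded. The compact embedding $H^1_{\mathrm{rad}}(\R^2)\hookrightarrow L^{k+2}(\R^2)$ (Strauss' radial lemma) then gives, along a subsequence, $u_n\rightharpoonup\psi$ in $H^1$ and $u_n\to\psi$ strongly in $L^{k+2}$; hence $\|\psi\|_{L^{k+2}}^{k+2}=J_{\min}^{-1}>0$, while weak lower semicontinuity gives $\|\nabla\psi\|_{L^2}\le1$ and $\|\psi\|_{L^2}\le1$, so $J(\psi)\le J_{\min}$ and $\psi$ is an extremizer, with in fact $\|\nabla\psi\|_{L^2}=\|\psi\|_{L^2}=1$.

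Next I would derive the Euler--Lagrange equation. Stationarity $\frac{d}{d\e}J(\psi+\e\phi)\big|_{\e=0}=0$ for all $\phi\in C_0^\infty(\R^2)$ shows that $\psi$ is a weak solution of $-a\Delta\psi+b\,\psi=c\,\psi^{k+1}$ for suitable positive constants $a,b,c$ depending only on the (now fixed) norms of $\psi$; since $J$ is unchanged under $u\mapsto|u|$ we may take $\psi\ge0$, and elliptic regularity together with the strong maximum principle give $\psi\in C^\infty$ with $\psi>0$. A further rescaling $\psi\mapsto\mu\,\psi(\lambda\,\cdot)$ with appropriate $\mu,\lambda>0$ turns this equation into exactly \eqref{ground}, without altering $J(\psi)=J_{\min}$. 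It then remains to evaluate $J(\psi)$ via two integral identities for \eqref{ground}: pairing \eqref{ground} with $\psi$ and integrating gives $\tfrac{k}{2}\|\nabla\psi\|_{L^2}^2+\|\psi\|_{L^2}^2=\|\psi\|_{L^{k+2}}^{k+2}$, while the Pohozaev identity in dimension two (pair \eqref{ground} with $x\cdot\nabla\psi$ and integrate) gives $\|\psi\|_{L^2}^2=\tfrac{2}{k+2}\|\psi\|_{L^{k+2}}^{k+2}$. Combining the two forces $\|\nabla\psi\|_{L^2}^2=\|\psi\|_{L^2}^2$, and therefore
\[
J_{\min}=J(\psi)=\frac{\|\psi\|_{L^2}^{k}\,\|\psi\|_{L^2}^{2}}{\tfrac{k+2}{2}\,\|\psi\|_{L^2}^{2}}=\frac{2}{k+2}\,\|\psi\|_{L^2}^{k},
\]
so that $K_{\rm opt}^{k+2}=J_{\min}^{-1}=\dfrac{k+2}{2\,\|\psi\|_{L^2}^{k}}$, which is \eqref{opt}.

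The only genuinely hard external ingredient is the uniqueness of the non-negative, radially symmetric, decreasing solution of \eqref{ground} — equivalently, up to scaling, of $\Delta Q-Q+Q^{k+1}=0$ on $\R^2$ — which I would invoke rather than reprove: every positive finite-energy solution is radial and monotone by Gidas--Ni--Nirenberg, and the resulting radial ODE has a unique decaying solution by the classical analysis of Kwong. Together with the discussion above this also shows that every extremizer of $J$ agrees with $\psi$ up to the scaling symmetries. I expect the compactness argument — eliminating the dilation/translation non-compactness of the minimizing sequence through rearrangement and the Strauss embedding — to be the step demanding the most care in a fully self-contained proof.
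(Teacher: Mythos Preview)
Your proposal is correct and is precisely Weinstein's original variational argument; the paper does not give an independent proof but simply cites \cite[Corollary~2.1]{W83}, so your sketch is exactly the approach behind the paper's reference.
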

\begin{proof}
See \cite[Corollary 2.1]{W83}.
\end{proof}

\begin{remark}\label{remark1}
If $\psi$ is the solution of \eqref{ground}, then by uniqueness
$$
Q(x,y)=\psi\left(\sqrt{\frac k 2}(x,y)\right),
$$
is the solution of
\begin{equation}\label{ground3}
\Delta Q-Q+Q^{k+1}=0.
\end{equation}
Moreover,
$$
\|Q\|^2_{L^2}=\frac{2}{k} \|\psi\|^2_{L^2}.
$$
\end{remark}

In view of Remark \ref{remark1} and \eqref{opt}, we deduce that
\begin{equation}\label{opt1}
K_{\rm opt}^{k+2}=\frac{2^{\frac{k-2}{2}}(k+2)}{k^{\frac k 2}\|Q\|_{L^2}^k}.
\end{equation}
Now, by multiplying (\ref{ground3}) by $Q$, integrating, and
applying integration by parts, we obtain
$$
\int_{\R^2}Q^{k+2}\, dxdy=\|Q\|_{L^{2}}^{2}+\|\nabla Q\|_{L^{2}}^{2}.
$$
On the other hand, by multiplying (\ref{ground3}) by $(x,y)\cdot
\nabla Q$, integrating, and applying integration by parts, we
promptly  obtain the  identity
$$
\int_{\R^2}Q^{k+2}\, dxdy=\frac{k+2}{2}\|Q\|_{L^{2}}^{2}.
$$
Combining the last two relations, we have
\begin{equation}\label{relQ}
\frac{ k}{ 2} \|Q\|_{L^{2}}^{2}=\|\nabla Q\|_{L^{2}}^{2}.
\end{equation}
With these tools at hand, we are able to prove Theorem
\ref{global1}.

\begin{proof}[Proof of Theorem \ref{global1}]
We proceed as follows: write the $\dot{H}^1$-norm of $u(t)$ using
the quantities $M(u(t))$ and $E(u(t))$. Then we use the sharp
Gagliardo-Nirenberg inequality \eqref{g-n}, with the sharp constant
in \eqref{opt1}, to yield
\begin{equation}\label{ap10}
\begin{split}
\|\nabla u(t)\|_{L^2}^2&=2E(u_0)+\frac{2}{k+2} \int_{\R^2}u^{k+2}(t)\,dxdy\\
&\le 2E(u_0)+\frac{2}{k+2}K_{\rm opt}^{k+2}\,
\|u_0\|_{L^2}^{2}\|\nabla u(t)\|_{L^2}^{k}\\
&=2E(u_0)+\left(\frac{2}{k}\right)^{\frac{k}{2}}\frac{1}{\|Q\|_{L^2}^k}\|u_0\|_{L^2}^2\|\nabla u(t)\|_{L^2}^k.
\end{split}
\end{equation}
Let $X(t)=\|\nabla u(t)\|_{L^2}^2$, $A=2E(u_0)$, and
$B=\left(\frac{2}{k}\right)^{\frac{k}{2}}\frac{\|u_0\|_{L^2}^2}{\|Q\|_{L^2}^k}$,
then we can write \eqref{ap10} as
\begin{equation}\label{ap12}
X(t)-B\,X(t)^{k/2}\le A, \text{\hskip2pt for}\;\;t\in (0,T),
\end{equation}
where $T$ is given by Theorem \ref{reviewtheo} (or Theorem
\ref{theorem3} if $k>8$).

Now let $f(x)=x-B\,x^{k/2}$, for $x\ge 0$. The function $f$ has a
local maximum at $x_0=\Big(\dfrac{2}{kB}\Big)^{2/(k-2)}$ with
maximum value
$f(x_0)=\dfrac{k-2}{k}\Big(\dfrac{2}{kB}\Big)^{2/(k-2)}.$ If we
require that
\begin{equation}\label{ap13}
E(u_0) < f(x_0)\,\,\,\, \mbox{and}\, \,\,\, X(0) < x_0,
\end{equation}
the continuity of $X(t)$ implies that $X(t) < x_0$ for any $t$ as long as the solution exists.

Using relations (\ref{relQ}), we have
$$
E(Q)=\dfrac{k-2}{4}\|Q\|_{L^2}^2.
$$
Therefore, a simple calculation shows that conditions (\ref{ap13})
are exactly the inequalities (\ref{GR1}) and (\ref{GR2}). Moreover
the inequality $X(t) < x_0$ reduces to (\ref{GR3}). The proof of
Theorem \ref{global1} is thus completed.
\end{proof}

\section{Scattering: Proof of Theorem \ref{scattering}}\label{scasec}

We start by recalling the following decay result for solutions
$u(t)=U(t)f$, of the linear problem \eqref{a1}.

\begin{proposition}\label{prop1}
Let $0\leq \varepsilon<1/2$ and $0\leq \theta \leq1$. Then,
$$
\|D_x^{\theta\varepsilon}U(t)f\|_{L^p_{xy}}\leq C|t|^{-\theta\frac{(2+\varepsilon)}{3}}\|f\|_{L^{p'}_{xy}},
$$
where $p=\frac{2}{1-\theta}$ and $p'=\frac{2}{1+\theta}$. In
particular,
$$
\|U(t)f\|_{L^p_{xy}}\leq C|t|^{-\frac{2\theta}{3}}\|f\|_{L^{p'}_{xy}}.
$$
\end{proposition}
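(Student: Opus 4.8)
The plan is to obtain Proposition~\ref{prop1} by interpolating between the two endpoint cases $\theta=0$ and $\theta=1$. The case $\theta=0$ is immediate: since $U(t)$ acts on the Fourier side by multiplication by the unimodular symbol $e^{it(\xi^3+\xi\eta^2)}$, Plancherel gives $\|U(t)f\|_{L^2_{xy}}=\|f\|_{L^2_{xy}}$. The entire content therefore lies in the endpoint $\theta=1$, namely the dispersive bound
\begin{equation*}
\|D_x^{\varepsilon}U(t)f\|_{L^\infty_{xy}}\;\lesssim\;|t|^{-(2+\varepsilon)/3}\,\|f\|_{L^1_{xy}},\qquad 0\le\varepsilon<\tfrac12 .
\end{equation*}
Writing $D_x^{\varepsilon}U(t)f=K_t^{\varepsilon}\ast f$, where $K_t^{\varepsilon}$ has Fourier symbol $|\xi|^{\varepsilon}e^{it(\xi^3+\xi\eta^2)}$, Young's inequality reduces this to the kernel estimate $\|K_t^{\varepsilon}\|_{L^\infty_{xy}}\lesssim|t|^{-(2+\varepsilon)/3}$; and the scaling $\xi\mapsto|t|^{-1/3}\xi$, $\eta\mapsto|t|^{-1/3}\eta$ (with complex conjugation handling $t<0$) gives $K_t^{\varepsilon}(x,y)=|t|^{-(2+\varepsilon)/3}K_1^{\varepsilon}(|t|^{-1/3}x,|t|^{-1/3}y)$, so it suffices to prove the uniform bound $\sup_{(x,y)\in\R^2}|K_1^{\varepsilon}(x,y)|<\infty$.

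To estimate $K_1^{\varepsilon}(x,y)=\int_{\R^2}|\xi|^{\varepsilon}e^{i(\xi^3+\xi\eta^2+x\xi+y\eta)}\,d\xi\,d\eta$, the first step is the $\eta$-integration: the phase $\xi\eta^2+y\eta$ is quadratic, and completing the square gives the Fresnel identity $\int_{\R}e^{i(\xi\eta^2+y\eta)}\,d\eta=c\,|\xi|^{-1/2}e^{i\frac{\pi}{4}\sgn(\xi)}e^{-iy^2/(4\xi)}$. This leaves the one-dimensional oscillatory integral
\begin{equation*}
K_1^{\varepsilon}(x,y)=c\int_{\R}|\xi|^{\varepsilon-1/2}\,e^{i\phi(\xi)}\,e^{i\frac{\pi}{4}\sgn(\xi)}\,d\xi,\qquad \phi(\xi)=\xi^3+x\xi-\frac{y^2}{4\xi},
\end{equation*}
which I would bound uniformly in $(x,y)$ by a dyadic decomposition in $|\xi|$ combined with van der Corput's lemma and non-stationary-phase integration by parts. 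On $|\xi|\lesssim 1$ the weight $|\xi|^{\varepsilon-1/2}$ is integrable; on $|\xi|\gtrsim1$ one works with $\phi''(\xi)=6\xi-y^2/(2\xi^3)$ and $\phi'''(\xi)=6+3y^2/(2\xi^4)$, integrating by parts where $|\phi'|$ is large and applying van der Corput near the critical points of $\phi$ (which exist only for $x<0$ and, when $x^2>3y^2$, are non-degenerate and hence give extra decay). The only delicate contribution comes from the fold-type degenerate critical points of $\phi$ — located near $|\xi|\sim|x|^{1/2}$ when $x^2\sim 3y^2$, where $\phi''$ vanishes but $\phi'''$ equals an absolute constant — so that van der Corput supplies merely an $O(1)$ gain and the fold contributes a term comparable to the weight $|\xi|^{\varepsilon-1/2}$ evaluated at $|\xi|\sim|x|^{1/2}$; this stays bounded as $|x|\to\infty$ exactly because $\varepsilon<1/2$. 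Assembling the pieces gives $\sup_{(x,y)\in\R^2}|K_1^{\varepsilon}(x,y)|<\infty$.

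With both endpoints established, the conclusion follows from Stein's analytic interpolation theorem applied to the family $T_z$ with Fourier symbol $|\xi|^{z}e^{it(\xi^3+\xi\eta^2)}$ on the strip $0\le\Re z\le\varepsilon$: on $\Re z=0$ the symbol is unimodular, so $T_z:L^2\to L^2$ with norm $1$, while on $\Re z=\varepsilon$ the kernel analysis above (which depends on $z$ only through the harmless extra oscillation $e^{i(\Im z)\log|\xi|}$, producing at most polynomial growth in $|\Im z|$) gives $T_z:L^1\to L^\infty$ with norm $\lesssim|t|^{-(2+\varepsilon)/3}$. Taking $z=\theta\varepsilon$ yields $D_x^{\theta\varepsilon}U(t):L^{p'}\to L^{p}$ with $\frac1p=\frac{1-\theta}{2}$, $\frac1{p'}=\frac{1+\theta}{2}$, and operator norm $\lesssim|t|^{-\theta(2+\varepsilon)/3}$, which is exactly the claimed inequality; the ``in particular'' statement is the case $\varepsilon=0$. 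I expect the main obstacle to be the uniform-in-$(x,y)$ stationary-phase analysis of the cubic-type integral defining $K_1^{\varepsilon}$ — in particular tracking the possibly colliding critical points of $\phi$ in the presence of the singular-and-decaying weight $|\xi|^{\varepsilon-1/2}$, this being where the hypothesis $\varepsilon<1/2$ actually enters; everything else is routine bookkeeping and soft interpolation.
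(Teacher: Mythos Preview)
The paper does not actually prove this proposition: its entire proof is the single line ``See Linares and Pastor \cite[Lemma 2.3]{LP}.'' Your sketch is therefore not competing against an argument in the paper but supplying one where the authors chose to cite. The route you outline --- reducing by scaling to $t=1$, performing the quadratic $\eta$-integral to obtain a one-dimensional oscillatory integral with weight $|\xi|^{\varepsilon-1/2}$ and phase $\xi^3+x\xi-y^2/(4\xi)$, controlling that integral via van der Corput (the third-derivative version at the fold, which is where $\varepsilon<1/2$ is genuinely used), and then closing by Stein interpolation against the unitary $L^2$ endpoint --- is exactly the standard argument, modeled on Kenig--Ponce--Vega's treatment of the Airy group and presumably what the cited reference does. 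At the level of a sketch it is correct; for a complete proof you would need to justify the Fresnel evaluation and the interchange of integrals in the distributional sense, and to be a bit more careful in the dyadic summation away from the fold (showing that the non-fold pieces contribute something summable in $j$, not merely bounded), but none of this hides a real obstacle.
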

\begin{proof}
See Linares and Pastor \cite[Lemma 2.3]{LP}.
\end{proof}

As a consequence, we have.

\begin{corollary}\label{cor1}
Let $p$ and $p'$ be as in Proposition \ref{prop1}. If $0\leq \theta<1$ and $f\in L^{p'}(\R^2)\cap H^1(\R^2)$, then
$$
\|U(t)f\|_{L^p_{xy}}\leq C(1+|t|)^{-\frac{2\theta}{3}}(\|f\|_{L^{p'}}+\|f\|_{H^1}).
$$
\begin{proof}
The proof follows immediately from Proposition \ref{prop1} and the
embedding of $H^1$ in $L^p$, $2\leq p<\infty$.
\end{proof}
\end{corollary}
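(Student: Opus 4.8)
The plan is to prove Corollary \ref{cor1} by splitting the time variable into the two regimes $|t|\le 1$ and $|t|>1$, controlling the short-time singularity of Proposition \ref{prop1} by the Sobolev embedding and unitarity of the group, and controlling the long-time decay directly by Proposition \ref{prop1}.

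First I would record two elementary facts. Since $U(t)$ is a unitary Fourier multiplier (see \eqref{a2}), it commutes with $D^\alpha_x$ and $D^\alpha_y$ and therefore preserves every $L^2$-based Sobolev norm; in particular $\|U(t)f\|_{H^1}=\|f\|_{H^1}$ for all $t$. Second, because $0\le\theta<1$ we have $2\le p=\frac{2}{1-\theta}<\infty$, so the two-dimensional Sobolev embedding gives $\|g\|_{L^p_{xy}}\lesssim\|g\|_{H^1}$ for every $g\in H^1(\R^2)$.

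For $|t|\le 1$ I would combine these two facts to write
$$
\|U(t)f\|_{L^p_{xy}}\lesssim\|U(t)f\|_{H^1}=\|f\|_{H^1}\le\|f\|_{L^{p'}}+\|f\|_{H^1},
$$
and since $(1+|t|)^{-2\theta/3}$ is bounded below by $2^{-2\theta/3}$ on this range, this is exactly the claimed bound after absorbing the constant. For $|t|>1$ I would instead invoke the special case $\varepsilon=0$ of Proposition \ref{prop1}, which yields $\|U(t)f\|_{L^p_{xy}}\lesssim|t|^{-2\theta/3}\|f\|_{L^{p'}}$; on $|t|>1$ one has $1+|t|\le 2|t|$, hence $|t|^{-2\theta/3}\le 2^{2\theta/3}(1+|t|)^{-2\theta/3}$, and the desired estimate follows after dominating $\|f\|_{L^{p'}}$ by $\|f\|_{L^{p'}}+\|f\|_{H^1}$. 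Taking the larger of the two constants produces a single $C$ valid for all $t$.

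There is no genuine obstacle here; the argument is a routine interpolation in time between the unitarity/Sobolev bound, which is harmless near $t=0$, and the dispersive decay of Proposition \ref{prop1}, which is effective for large $t$. The only point requiring a moment of care is the endpoint behaviour: the decay estimate of Proposition \ref{prop1} is singular as $t\to0$, which is precisely what forces the split at $|t|=1$ and the use of the $H^1$ norm to absorb the short-time contribution; one must also keep $\theta$ strictly below $1$ so that $p<\infty$ and the embedding $H^1(\R^2)\hookrightarrow L^p(\R^2)$ remains available.
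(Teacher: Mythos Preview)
Your argument is correct and is exactly the routine the paper has in mind: the one-line proof in the paper (``follows immediately from Proposition \ref{prop1} and the embedding of $H^1$ in $L^p$, $2\le p<\infty$'') amounts precisely to your split $|t|\le 1$ versus $|t|>1$, using unitarity plus Sobolev near $t=0$ and the dispersive estimate for large $t$. You have merely spelled out the details.
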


\begin{theorem}[Decay] \label{decaytheorem}
Let $p=2(k+1)$, $p'=\frac{2(k+1)}{2k+1}$, and
$\theta=\frac{k}{k+1}$. Assume  $u_0\in L^{p'}(\R^2)\cap H^1(\R^2)$
satisfies
$$
\|u_0\|_{L^{p'}}+\|u_0\|_{H^1}<\delta.
$$
Then, the solution $u(t)$ given in Theorem \ref{global1} satisfies
$$
(1+|t|)^{\frac{2\theta}{3}}\|u(t)\|_{L^p}\leq C
$$
for all $t\in\R$ and some constant $C>0$.
\end{theorem}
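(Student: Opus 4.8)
The plan is to run a standard bootstrap (continuity) argument on the integral equation, using the linear decay estimate of Corollary \ref{cor1} to control the $L^p$-norm in time. Write the solution via Duhamel's formula as
$$
u(t)=U(t)u_0+\int_0^t U(t-t')\partial_x(u^{k+1})(t')\,dt',
$$
and define $M(t)=\sup_{0\le t'\le t}(1+|t'|)^{2\theta/3}\|u(t')\|_{L^p}$ with $p=2(k+1)$, $\theta=k/(k+1)$. The linear term is bounded by $C\delta$ directly from Corollary \ref{cor1} (using $u_0\in L^{p'}\cap H^1$ and $\|u_0\|_{L^{p'}}+\|u_0\|_{H^1}<\delta$). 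For the nonlinear term, I would \emph{not} try to place a derivative from $\partial_x$ onto the group; instead, integrate by parts in $x$ so that $\partial_x$ falls on $u^{k+1}$ producing $(k+1)u^k u_x$, and then estimate $\|U(t-t')\,\partial_x(u^{k+1})(t')\|_{L^p}$ — but this puts a derivative on $u$, which at the $H^1$ level is fine in $L^2$ but awkward in $L^{p'}$.

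A cleaner route: apply Proposition \ref{prop1} with a fractional power to absorb the $\partial_x$ as a genuine derivative gain. Taking $\varepsilon\in(0,1/2)$ and writing $\partial_x = D_x^{1-\theta\varepsilon}\cdot(\text{a bounded-on-}L^p\text{ multiplier})$ is not literally right since $1>\theta\varepsilon$, so instead I would estimate the Duhamel term in $H^1$ in a first pass (this is where global existence from Theorem \ref{global1} guarantees $\|u(t)\|_{H^1}\le C$ uniformly), and then bound the $L^p$ piece by interpolating: $\|\partial_x(u^{k+1})\|_{L^{p'}}\lesssim \|u\|_{L^\infty}^{k}\|\nabla u\|_{L^{p'}}$ — and control $\|u\|_{L^\infty}$ and $\|\nabla u\|_{L^{p'}}$ by $H^1$ plus a fraction of an $L^p$ power using Gagliardo–Nirenberg, so that roughly
$$
(1+|t|)^{2\theta/3}\Big\|\int_0^t U(t-t')\partial_x(u^{k+1})\,dt'\Big\|_{L^p}
\lesssim (1+|t|)^{2\theta/3}\!\int_0^t (1+|t-t'|)^{-2\theta/3}\,\|u(t')\|_{L^p}^{k}\,C(\|u\|_{H^1})\,dt'.
$$
Since $\|u(t')\|_{L^p}^k\le M(t)^k (1+|t'|)^{-2k\theta/3}$ and $2k\theta/3 = 2k^2/(3(k+1))>1$ for $k\ge 3$, the time integral $\int_0^t (1+|t-t'|)^{-2\theta/3}(1+|t'|)^{-2k\theta/3}\,dt'$ is bounded by $C(1+|t|)^{-2\theta/3}$ (a standard convolution-of-power-weights lemma, using $2\theta/3<1<2k\theta/3$). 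This yields
$$
M(t)\le C\delta + C\,\delta^{k-1} M(t)^{k},
$$
after noting $C(\|u\|_{H^1})\le C\delta^{\,?}$; more precisely one keeps one power of $M$ explicit and bounds the remaining $\|u\|_{H^1}$-dependence and $(k-1)$ of the $L^p$-factors by the smallness $\delta$. A continuity/bootstrap argument then forces $M(t)\le 2C\delta$ for all $t$, provided $\delta$ is small enough, giving the claimed uniform bound $(1+|t|)^{2\theta/3}\|u(t)\|_{L^p}\le C$.

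The main obstacle is handling the derivative loss from $\partial_x(u^{k+1})$ while staying within the $L^{p'}\to L^p$ decay framework: the decay estimate of Proposition \ref{prop1} only gives a \emph{fractional} $x$-derivative gain of order $\theta\varepsilon<1/2$, not a full derivative, so one cannot simply apply it to $\partial_x(u^{k+1})$. The resolution is to exploit the $H^1$ global bound from Theorem \ref{global1} to trade the full derivative for an $L^2$-based quantity — i.e., split $\partial_x(u^{k+1})=(k+1)u^k u_x$ and estimate $\|u^k u_x\|_{L^{p'}}$ by Hölder as $\|u\|_{L^\infty}^{k-1}\|u\|_{L^{2(k+1)/(2k+1)\cdot\text{(conjugate)}}}\|u_x\|_{L^2}$-type products, then use Sobolev embedding $H^1(\R^2)\hookrightarrow L^q$ for all $q<\infty$ and Gagliardo–Nirenberg to reinsert a controlled fractional power of $\|u\|_{L^p}$. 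The necessary arithmetic — that the resulting time-integral exponents satisfy the convergence condition $2k\theta/3>1$ and $2\theta/3<1$, both of which hold for every integer $k\ge 3$ with $\theta=k/(k+1)$ — is routine and I would cite \cite{PV}, \cite{St} for the analogous computations rather than reproduce them in full.
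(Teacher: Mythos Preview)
Your overall architecture---bootstrap on $M(t)=\sup_{t'\le t}(1+|t'|)^{2\theta/3}\|u(t')\|_{L^p}$, linear decay from Corollary~\ref{cor1}, and the convolution-of-weights lemma using $2\theta/3<1<2k\theta/3$---matches the paper exactly, and your target inequality $M(T)\le C\delta+C\delta\,M(T)^k$ is the right one. The gap is in how you justify the nonlinear estimate, i.e.\ how you get from $\|\partial_x(u^{k+1})\|_{L^{p'}}$ to $\|u\|_{L^p}^k\cdot C(\|u\|_{H^1})$.

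Both routes you sketch go through $\|u\|_{L^\infty}$, and this fails in two dimensions: $H^1(\R^2)$ does \emph{not} embed into $L^\infty$, and no Gagliardo--Nirenberg interpolation between $L^p$ (finite $p$) and $H^1$ will produce an $L^\infty$ bound. Your first variant also puts $\nabla u$ in $L^{p'}$ with $p'=\tfrac{2(k+1)}{2k+1}<2$, which is likewise not controlled by $\|u\|_{H^1}$. So the ``main obstacle'' you identify---the derivative loss---is a red herring generated by a suboptimal H\"older split; there is in fact no obstacle at all. The paper simply observes that the exponents are arranged so that
\[
\frac{1}{p'}=\frac{2k+1}{2(k+1)}=\frac{k}{2(k+1)}+\frac{1}{2}=\frac{k}{p}+\frac{1}{2},
\]
whence a single H\"older gives
\[
\|u^k\,u_x\|_{L^{p'}}\le \|u^k\|_{L^{p/k}}\|u_x\|_{L^2}=\|u\|_{L^p}^k\,\|u_x\|_{L^2}\le \|u\|_{L^p}^k\,\|u\|_{L^\infty_t H^1}.
\]
No $L^\infty$ bound, no fractional smoothing from Proposition~\ref{prop1}, and no Gagliardo--Nirenberg are needed; the derivative lands exactly in $L^2$ and the remaining factor lands exactly in $L^p$. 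With this one line in place, your bootstrap closes as written.
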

\begin{proof}
From the integral formulation of \eqref{gzk}, we have
$$
u(t)=U(t)u_0-\int_0^tU(t-t')\p_x(u^{k+1})(t')\,dt'.
$$
Without loss of generality assume $t>0$. Thus, from Proposition
\ref{prop1} and Corollary \ref{cor1}, we have
\begin{equation*}
\begin{split}
\|u(t)\|_{L^p}&\leq \|U(t)u_0\|_{L^{p}}+\int_0^t\|U(t-t')\p_x(u^{k+1})(t')\|_{L^p}\,dt'\\
& \leq C(1+t)^{-\frac{2\theta}{3}}(\|u_0\|_{L^{p'}}+\|u_0\|_{H^1})+C\int_0^t(t-t')^{-\frac{2\theta}{3}}
\|\p_x(u^{k+1})(t')\|_{L^{p'}}\,dt'\\
&\leq C(1+t)^{-\frac{2\theta}{3}}\delta+C\int_0^t(t-t')^{-\frac{2\theta}{3}}
\|u^k\p_xu(t')\|_{L^{p'}}\,dt'\\
&\leq C(1+t)^{-\frac{2\theta}{3}}\delta+C\int_0^t(t-t')^{-\frac{2\theta}{3}} \|u^k\|_{L^{\frac{2(k+1)}{k}}}\|\p_xu\|_{L^2}\,dt'\\
&\leq C(1+t)^{-\frac{2\theta}{3}}\delta+C\|u\|_{L^\infty_TH^1}\int_0^t(t-t')^{-\frac{2\theta}{3}} \|u(t')\|_{L^p}^k\,dt'.
\end{split}
\end{equation*}
Let
$$
M(T)=\sup_{t\in[0,T]}(1+t)^{\frac{2\theta}{3}}\|u(t)\|_{L^p}.
$$
Then,  we can write
\begin{equation} \label{M(T)}
M(T)\leq C\delta+C\delta(1+t)^{\frac{2\theta}{3}}M(T)^k\int_0^t(t-t')^{-\frac{2\theta}{3}} (1+t')^{\frac{2\theta k}{3}}\,dt'.
\end{equation}
Since $k\geq3$, we then obtain
$$
M(T)\leq C\delta+C\delta M(T)^k.
$$
Hence, if $\delta\ll1$, we deduce from a continuity argument that
$M(T)\leq C$. This completes the proof.
\end{proof}

\begin{remark}
From \eqref{M(T)} we see that it suffices to take
$k>\frac{3+\sqrt{33}}{4}\simeq 2.186$. Note that the case $k=2$
($L^2$-critical) is not cover by our result and it is a very
interesting open problem.
\end{remark}

In the proof of Theorem \ref{scattering}, we only consider the case
as $t\rightarrow-\infty$, since that as $t\rightarrow+\infty$ is
similarly treated. Define
$$
f_{-}=u_0-\int_{-\infty}^0U(-t')\pa_x(u^{k+1})\,dt'.
$$
Then,
$$
u(t)-U(t)f_{-}=\int_{-\infty}^tU(t-t')\pa_x(u^{k+1})\,dt'.
$$

\begin{lemma} \label{lemmat}
$\|U(-t)u(t)-f_{-}\|_{L^{2(k+1)}}\rightarrow0$, as
$t\rightarrow-\infty$.
\end{lemma}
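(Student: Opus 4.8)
The plan is to show that the tail integral $u(t)-U(t)f_{-}=\int_{-\infty}^{t}U(t-t')\pa_x(u^{k+1})(t')\,dt'$ tends to zero in $L^{2(k+1)}(\R^2)$ as $t\to-\infty$, since $U(-t)$ is an isometry on $L^2$ but not on $L^{2(k+1)}$; here the point is to exploit the dispersive decay of the group together with the integrability in time that Theorem \ref{decaytheorem} provides. First I would fix $p=2(k+1)$, $p'=\frac{2(k+1)}{2k+1}$ and $\theta=\frac{k}{k+1}$, exactly as in the Decay theorem, and write, for $t<0$,
\begin{equation*}
\|U(-t)u(t)-f_{-}\|_{L^{p}}=\Big\|\int_{-\infty}^{t}U(-t')\pa_x(u^{k+1})(t')\,dt'\Big\|_{L^{p}}\le\int_{-\infty}^{t}\big\|U(-t')\pa_x(u^{k+1})(t')\big\|_{L^{p}}\,dt'.
\end{equation*}

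Next I would apply Proposition \ref{prop1} (with these values of $p$, $p'$, $\theta$), which gives $\|U(-t')g\|_{L^{p}}\lesssim |t'|^{-2\theta/3}\|g\|_{L^{p'}}$, to each factor of the integrand, obtaining
\begin{equation*}
\|U(-t)u(t)-f_{-}\|_{L^{p}}\lesssim\int_{-\infty}^{t}|t'|^{-2\theta/3}\,\big\|\pa_x(u^{k+1})(t')\big\|_{L^{p'}}\,dt'.
\end{equation*}
Then, exactly as in the proof of Theorem \ref{decaytheorem}, I would estimate the nonlinearity by $\|\pa_x(u^{k+1})\|_{L^{p'}}\lesssim\|u^k\|_{L^{2(k+1)/k}}\|\pa_x u\|_{L^2}\lesssim\|u\|_{H^1}\,\|u(t')\|_{L^{p}}^{k}$ via H\"older, and invoke the decay bound $\|u(t')\|_{L^{p}}\le C(1+|t'|)^{-2\theta/3}$ already established in Theorem \ref{decaytheorem} together with the global $H^1$ bound from Theorem \ref{global1}. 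This reduces the problem to showing that
\begin{equation*}
\int_{-\infty}^{t}|t'|^{-2\theta/3}(1+|t'|)^{-2\theta k/3}\,dt'\longrightarrow 0\quad\text{as }t\to-\infty,
\end{equation*}
which holds provided the total exponent $\frac{2\theta}{3}(k+1)=\frac{2k}{3}>1$, i.e.\ $k\ge 2$; since we assume $k\ge3$ this tail integral is the convergent remainder of an integrable function and hence tends to $0$.

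The only slightly delicate point is the behavior of the weight $|t'|^{-2\theta/3}$ near $t'=0$: for $t<0$ the integration variable $t'$ stays negative so $|t'|$ is bounded below by $|t|$, and the singularity of $|t'|^{-2\theta/3}$ at the origin never enters the range of integration, so no local integrability issue arises — one simply bounds $|t'|^{-2\theta/3}(1+|t'|)^{-2\theta k/3}\lesssim (1+|t'|)^{-2\theta(k+1)/3}$ for $|t'|\ge 1$ and controls the finitely many remaining instants separately, or more cleanly estimates $\int_{-\infty}^{t}(1+|t'|)^{-2k/3}\,dt'\lesssim (1+|t|)^{1-2k/3}\to0$. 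I expect the main (very mild) obstacle to be simply bookkeeping the exponents to confirm $\frac{2k}{3}>1$ gives a convergent tail and writing the H\"older split for $\pa_x(u^{k+1})$ cleanly; there is no serious analytic difficulty once Proposition \ref{prop1} and Theorem \ref{decaytheorem} are in hand, and Theorem \ref{scattering} then follows since $\|u(t)-U(t)f_{-}\|_{H^1}$ is handled by the same scheme with the $L^2$-isometry of $U(t)$ replacing the dispersive estimate for the $\dot H^1$-part and the above argument for the $L^{p}$-controlled nonlinear term.
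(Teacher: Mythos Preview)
Your proof is correct and follows essentially the same route as the paper: you write $U(-t)u(t)-f_-=\int_{-\infty}^t U(-t')\pa_x(u^{k+1})(t')\,dt'$, apply the $L^{p'}\to L^p$ dispersive estimate of Proposition~\ref{prop1}, split $\|\pa_x(u^{k+1})\|_{L^{p'}}\lesssim\|u\|_{L^p}^k\|\pa_x u\|_{L^2}$ by H\"older, and then use the global $H^1$ bound and the decay from Theorem~\ref{decaytheorem}. The only difference is in the last step: the paper applies H\"older in $t'$ with exponents $2(k+1)$ and $\frac{2(k+1)}{2k+1}$ to separate the kernel $|t'|^{-2\theta/3}$ from $\|u(t')\|_{L^p}^k$, whereas you substitute the pointwise bound $\|u(t')\|_{L^p}\le C(1+|t'|)^{-2\theta/3}$ directly and reduce to $\int_{-\infty}^t(1+|t'|)^{-2k/3}\,dt'\to 0$. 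Your version is slightly more direct and the exponent bookkeeping ($\frac{2\theta}{3}(k+1)=\frac{2k}{3}>1$ for $k\ge 2$) is correct.
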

\begin{proof}
Indeed, from Proposition \ref{prop1}, we
$$
\|U(-t)u(t)-f_{-}\|_{L^{2(k+1)}}\leq
C\int_{-\infty}^t|t'|^{-\frac{2k}{3(k+1)}}\|u^k\pa_xu\|_{L^{\frac{2(k+1)}{2k+1}}}\,dt'.
$$
From H\"older's inequality and Theorem \ref{global1}, we then deduce
\begin{equation*}
\begin{split}
\|U(-t)u(t)-f_{-}\|_{L^{2(k+1)}}\leq &
C\int_{-\infty}^t|t'|^{-\frac{2k}{3(k+1)}}
\|u^k\|_{L^{\frac{2(k+1)}{k}}}\|\pa_xu\|_{L^2}\,dt'\\
\leq& C\|u\|_{L_t^\infty H^1}C \int_{-\infty}^t|t'|^{-\frac{2k}{3(k+1)}}\|u\|_{L^{2(k+1)}}^k\,dt'\\
\leq& C\left( \int_{-\infty}^t|t'|^{\frac{4k}{3}}
\right)^{\frac{1}{2(k+1)}} \left(
\int_{-\infty}^t\|u\|_{L^{2(k+1)}}^{\frac{2k(k+1)}{2k+1}}\,dt'
\right)^{\frac{2k+1}{2(k+1)}}.
\end{split}
\end{equation*}
Since $k\geq3$ these last two integrals tend to zero as
$t\rightarrow-\infty$.
\end{proof}

\begin{lemma}\label{lemmaG}
Let
$$
G(u)=\frac{1}{k+2}\int_{\R^2}u^{k+2}\,dx.
$$
Then, $G(u(t))\rightarrow0$, as $t\rightarrow-\infty$.
\end{lemma}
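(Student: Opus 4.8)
The plan is to show that the nonlinearity $G(u(t))=\frac{1}{k+2}\int u^{k+2}\,dxdy$ decays to zero by exploiting the pointwise-in-time decay of $\|u(t)\|_{L^{2(k+1)}}$ established in Theorem \ref{decaytheorem}, together with the uniform $H^1$ bound coming from Theorem \ref{global1}. First I would bound $G(u(t))$ by interpolating: since $2\le k+2\le 2(k+1)$ for $k\ge 0$, write $\int u^{k+2} \lesssim \|u\|_{L^{2(k+1)}}^{\alpha}\|u\|_{L^2}^{\beta}$ for appropriate exponents with $\alpha+\beta=k+2$, or more simply use $\|u\|_{L^{k+2}}^{k+2}\lesssim \|u\|_{L^{2(k+1)}}^{a}\,\|u\|_{L^{2}}^{k+2-a}$; since $\|u(t)\|_{L^2}=\|u_0\|_{L^2}$ is conserved, this reduces the matter to the decay of $\|u(t)\|_{L^{2(k+1)}}$.

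Concretely, I would use the Gagliardo–Nirenberg inequality \eqref{g-n} to write
\[
G(u(t)) = \frac{1}{k+2}\|u(t)\|_{L^{k+2}}^{k+2}\lesssim \|u(t)\|_{L^2}^2\,\|\nabla u(t)\|_{L^2}^k \lesssim \|u_0\|_{L^2}^2\,\|u(t)\|_{H^1}^k,
\]
which only gives boundedness; so instead I would interpolate the $L^{k+2}$ norm between $L^2$ and $L^{2(k+1)}$:
\[
\|u(t)\|_{L^{k+2}}\le \|u(t)\|_{L^2}^{1-\sigma}\,\|u(t)\|_{L^{2(k+1)}}^{\sigma},\qquad \sigma\in(0,1),
\]
where $\sigma$ is determined by $\frac{1}{k+2}=\frac{1-\sigma}{2}+\frac{\sigma}{2(k+1)}$, i.e. $\sigma=\frac{k(k+1)}{(k+2)k}$ wait — solving gives a fixed $\sigma\in(0,1)$ depending only on $k$. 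Then
\[
G(u(t))\lesssim \|u_0\|_{L^2}^{(1-\sigma)(k+2)}\,\|u(t)\|_{L^{2(k+1)}}^{\sigma(k+2)}.
\]
By Theorem \ref{decaytheorem}, $\|u(t)\|_{L^{2(k+1)}}\lesssim (1+|t|)^{-2\theta/3}$ with $\theta=\frac{k}{k+1}>0$, so the right-hand side tends to $0$ as $t\to-\infty$ (indeed as $t\to\pm\infty$), which is exactly the claim.

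The only mild obstacle is bookkeeping: one must verify that $k+2$ genuinely lies strictly between $2$ and $2(k+1)$ so that the interpolation exponent $\sigma$ is in $(0,1)$ — this holds for every integer $k\ge 1$, hence in particular for $k\ge 3$ — and that the power $\sigma(k+2)$ of the decaying factor is positive, which is automatic. No new estimates beyond Theorem \ref{global1} (uniform $H^1$ bound, equivalently mass conservation plus \eqref{GR3}) and Theorem \ref{decaytheorem} (the $L^p$ decay) are needed; the proof is a two-line Hölder/interpolation argument feeding the decay rate into the conserved $L^2$ norm.
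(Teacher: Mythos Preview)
Your proposal is correct and essentially the same as the paper's proof: the paper splits $u^{k+2}=u\cdot u^{k+1}$ and applies Cauchy--Schwarz to get $|G(u(t))|\lesssim \|u(t)\|_{L^2}\,\|u(t)\|_{L^{2(k+1)}}^{k+1}$, then invokes mass conservation and Theorem~\ref{decaytheorem}. Your $L^p$ interpolation between $L^2$ and $L^{2(k+1)}$ yields exactly this inequality (the exponent works out to $\sigma=\frac{k+1}{k+2}$, so $\sigma(k+2)=k+1$ and $(1-\sigma)(k+2)=1$), so the two arguments coincide.
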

\begin{proof}
From H\"older's inequality and Theorem \ref{decaytheorem}, we have
\begin{equation*}
\begin{split}
|G(u(t))|&\leq C\int_{\R^2}|u(t)|^{k+1}|u(t)|\,dx\\
& \leq C \|u(t)\|_{L^2}\left(\int_{\R^2}|u(t)|^{2(k+1)}\,dx\right)^{\frac{1}{2}}\\
&\leq C\|u(t)\|_{L^{2(k+1)}}^{k+1}\leq C (1+|t|)^{-\frac{2k}{3}}.
\end{split}
\end{equation*}
\end{proof}

\begin{proof}[Proof of Theorem \ref{scattering}]
Since $U(t)$ is a unitary group, from  Theorem \ref{global1}, we
obtain
$$
\|U(-t)u(t)\|_{H^1}=\|u(t)\|_{H^1}\leq C\|u_0\|_{H^1}.
$$
Thus $U(-t)u(t)\rightharpoonup f_{-}$ in $H^1$, as
$t\rightarrow-\infty$. Moreover,
\begin{equation*}
\begin{split}
\|f_{-}\|_{H^1}&\leq \liminf_{t\rightarrow-\infty}\|U(-t)u(t)\|_{H^1}=\liminf_{t\rightarrow-\infty}\|u(t)\|_{H^1}\\
&=\liminf_{t\rightarrow-\infty}\left(\|u(t)\|_{H^1}-2G(u(t))\right)\leq \|f_{-}\|_{H^1}.
\end{split}
\end{equation*}
Hence, the weak limit is strong and we have
$$
\|u(t)-U(t)f_{-}\|_{H^1}=\|U(-t)u(t)-f_{-}\|_{H^1}\rightarrow0,
$$
as $t\rightarrow-\infty$. This completes the proof of Theorem
\ref{scattering}.
\end{proof}

\section*{Acknowledgment}

\noindent L.G. Farah is partially supported by CNPq and
FAPEMIG/Brazil, F. Linares is partially supported by CNPq and
FAPERJ/Brazil, and A. Pastor is partially supported by CNPq and
FAPESP/Brazil.

\bibliographystyle{mrl}

\end{document}